\def\qed{\hfill{\raggedleft{\hbox{$\Box$}}} \smallskip}
\def\R{\mathbb{R}}
\def\G{\mathcal{G}}
\def\H{\mathcal{H}}
\def\D{\mathcal{D}}
\DeclareMathOperator{\conv}{conv}
\DeclareMathOperator{\spn}{span}
\theoremstyle{plain} \newtheorem{lem}{Lemma}
\theoremstyle{plain} \newtheorem{prop}[lem]{Proposition}
\theoremstyle{plain} \newtheorem{thm}[lem]{Theorem}
\theoremstyle{plain} \newtheorem{cor}[lem]{Corollary}
\theoremstyle{plain} 
\theoremstyle{plain} 
\theoremstyle{definition} \newtheorem{defn}[lem]{Definition}
\theoremstyle{definition}
\theoremstyle{definition} 
\theoremstyle{definition} 
\theoremstyle{definition}\newtheorem{ex}[lem]{Example}
\newlength\savedwidth
\def\1{\mathbf{1}}
\author{Ngoc Mai Tran}
\address{Department of Statistics, UC Berkeley, CA 94720, USA}
\email{tran@stat.berkeley.edu}
\thanks{The author would like to thank Bernd Sturmfels for helpful discussions. This research was supported by the DARPA Deep Learning program (FA8650-10-C-7020)}
\title{Polytropes and tropical eigenspaces: cones of linearity}
\begin{document}

\begin{abstract} The map which takes a square matrix $A$ to its polytrope is piecewise linear. We show that cones of linearity of this map form a polytopal fan partition of $\mathbb{R}^{n \times n}$, whose face lattice is anti-isomorphic to the lattice of complete set of connected relations. This fan refines the non-fan partition of $\R^{n \times n}$ corresponding to cones of linearity of the eigenvector map. Our results answer open questions in a previous work with Sturmfels \cite{ST11} and lead to a new combinatorial classification of polytropes and tropical eigenspaces.
\end{abstract}

\maketitle
\section{Introduction}
Consider doing arithmetics in the \emph{max-plus} tropical algebra $(\mathbb{R}, \oplus, \odot)$ defined by $u\oplus v = \max\{u, v\}, u \odot v = u + v$. In this case, a matrix $A \in \R^{n \times n}$ has a unique tropical eigenvalue $\lambda(A) \in \R$ \cite{bernd.trop, butkovic}. 
Its tropical eigenspace $Eig(A)$ is the set of vectors $x$ in the tropical torus $\mathbb{TP}^{n-1} := \mathbb{R}^n / \mathbb{R} \cdot (1, \ldots, 1)^T$ which satisfy the tropical eigenvector-eigenvalue equation
\vskip-1em
\begin{equation}\label{eqn:evec}
 \max_{1 \leq j \leq n} (A_{ij} + x_j) = \lambda(A) + x_i, \hspace{1em} \mbox{ for each } i = 1, \ldots, n.
 \end{equation}
The set $Eig(A)$ recently found applications in pairwise ranking \cite{elsner10, nmt}, where it appears as a special subset of solutions to the equation
\vskip-1em
\begin{equation}\label{eqn:pol}
 \max_{1 \leq i, j \leq n} (A_{ij} + x_j - x_i) = \lambda(A).
 \end{equation}
The set of $x \in \mathbb{TP}^{n-1}$ satisfying (\ref{eqn:pol}) is convex both in the tropical and ordinary sense. Following Joswig and Kulas \cite{JoswigK10}, we term it the \emph{polytrope of $A$}, denoted $Pol(A)$. 
Both $Eig(A)$ and $Pol(A)$ are tropical polytopes, that is, the tropical convex hull of finitely many points in $\mathbb{TP}^{n-1}$, as in \cite{DS04, SS08}. We shall abuse terminologies and identify $Eig(A)$ and $Pol(A)$ with their tropical extreme points. 

This paper studies conditions under which the piecewise linear maps $A \mapsto Eig(A)$ and $A \mapsto Pol(A)$ are given by linear functionals in the entries of $A$. In previous joint work with Sturmfels \cite{ST11}, we proved the existence of $\mathcal{P}_n$, a partition of $\R^{n \times n}$ into polyhedral cones on which the map $A \mapsto Eig(A)$ is linear. The open cones of $\mathcal{P}_n$ are in bijection with the set of \emph{connected functions}. The main results of this paper, stated below, parallel and extend these theorems. The notations will be fully defined later.

\begin{thm}\label{thm:buddy}
There exists a fan $\mathcal{F}_n$ such that in the relative interior of its cones, the polytrope map $A \mapsto Pol(A)$ is given by a unique set of linear functionals in the entries of ~$A$. Furthermore, $\mathcal{F}_n$ is the normal fan of an $n(n-1)$-dimensional polytope in $\R^{n \times n}$, whose face lattice is isomorphic to the lattice of complete sets of connected relations, denoted $\mathcal{CF}[n]$. 
\end{thm}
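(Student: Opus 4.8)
The strategy is to reduce $Pol(A)$ to the Kleene star of a renormalised matrix, read off the cones of linearity from the resulting max-of-linear-functionals structure, and match them against the combinatorics of $\mathcal{CF}[n]$. First I would rewrite (\ref{eqn:pol}) as $x_i-x_j\ge A_{ij}-\lambda(A)$ for all $i,j$ and set $B:=A-\lambda(A)\,\1\1^T$, so that $\lambda(B)=0$ and every cycle of $B$ has nonpositive weight. Then the max-plus transitive closure $B^{*}$, with entries $B^{*}_{ij}=\max\{w_B(P):P\text{ a walk }i\to j\}$ and $B^{*}_{ii}=0$, is finite, $Pol(A)=\{x:x_i-x_j\ge B^{*}_{ij}\ \forall i,j\}$, and its tropical vertices are the columns of $B^{*}$. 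Hence $A\mapsto Pol(A)$ is linear on a cone $\sigma$ iff every $B^{*}_{ij}$ is a linear functional of $A$ on $\sigma$, and since nonpositive cycles may be deleted, $B^{*}_{ij}=\max_{P}\bigl(w_A(P)-|P|\,\lambda(A)\bigr)$, the maximum over simple $i\to j$ paths. So linearity on $\sigma$ forces (i) a fixed cycle, or on lower-dimensional $\sigma$ a fixed tied set of cycles, attaining $\lambda$; and, given (i), (ii) a fixed simple path, or tied set, attaining the maximum for each $(i,j)$.

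Next I would package (i)+(ii) combinatorially. A \emph{connected relation} $R$ encodes, for one target index, a critical-cycle pattern together with a choice of maximal walks from every vertex; to it one attaches the linear functional $\phi_R$ giving the candidate value of the corresponding column of $B^{*}$ and the closed polyhedral cone $C_R$ cut out by the inequalities ``this cycle has maximal mean'' and ``these walks have maximal weight''. A \emph{complete set} $S\in\mathcal{CF}[n]$ is a mutually compatible family covering all $n$ columns; put $\sigma_S:=\bigcap_{R\in S}C_R$ and $\mathcal F_n:=\{\sigma_S:S\in\mathcal{CF}[n]\}$. I then prove: (a) each $\sigma_S$ is a polyhedral cone and $\bigcup_S\sigma_S=\R^{n\times n}$, since every $A$ has some critical cycle and some maximal walks; (b) on the relative interior of each maximal $\sigma_S$ the map $A\mapsto Pol(A)$ is given by $(\phi_R)_{R\in S}$ with the usual normalisation, using that columns of $B^{*}$ in one critical component differ by a known constant — this is exactly (i)+(ii); and (c) $\sigma_S\cap\sigma_{S'}$ is again of the form $\sigma_{S''}$, with $S''$ the join of $S,S'$ in $\mathcal{CF}[n]$, since passing to a common face forces precisely the extra ties that enlarge the constituent relations. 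Thus $\mathcal F_n$ is a fan whose face poset is $\mathcal{CF}[n]$, the order on the latter being fixed (reverse containment of relations) so that the correspondence is an isomorphism rather than an anti-isomorphism.

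For polytopality I would build the supporting polytope directly. One cannot simply take $\sum_{i,j}B^{*}_{ij}$ as a support function: $B^{*}_{ij}=\max_P(w_A(P)-|P|\lambda(A))$ is a maximum of concave functions (since $\lambda$ is convex), so neither it nor that sum is convex, and a priori its linearity domains need not even form a fan. The point is that, restricted to each maximal cone of the eigenvalue fan, $\lambda$ is a fixed linear functional and $B^{*}_{ij}$ becomes a genuine maximum $\max_R\phi_R$ of linear functionals; the substance of this step is the combinatorial fact that the way the optimal-walk sets relax across the walls of the eigenvalue fan is governed coherently by enlargement of connected relations, i.e.\ the functionals $\phi_R$ and the lattice $\mathcal{CF}[n]$ fit together globally. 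Granting this, I set $P_n$ to be the convex hull (equivalently Minkowski sum) of the vectors $\sum_{R\in S}c_R\in\R^{n\times n}$ over maximal $S$, where $c_R$ is the coefficient vector of $\phi_R$, and check that $\langle\,\cdot\,,A\rangle$ is maximised over these vertices exactly on $\sigma_S$; this exhibits a convex piecewise-linear function whose linearity domains are the maximal cones of $\mathcal F_n$, so $\mathcal F_n$ is the normal fan of $P_n$ and the face lattice of $P_n$ equals that of $\mathcal F_n$, namely $\mathcal{CF}[n]$. Finally, every $\sigma_S$ contains the $n$-dimensional space $\{A:A_{ij}=c_j-c_i+t\}$, on which $Pol$ is a single point depending linearly on $A$, and this is exactly the common lineality space of $\mathcal F_n$; hence $P_n$ spans an $n(n-1)$-dimensional affine subspace of $\R^{n\times n}$.

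The main obstacle is the coherence statement in the last step, equivalently the ``common face'' claim (c): because $\lambda$ is nested inside the definition of $B^{*}$, the walls refining each eigenvalue cone have normal vectors depending on that cone's critical cycle, so global normality of $\mathcal F_n$ is not formal and must be forced by the design of connected relations and by the lattice structure of $\mathcal{CF}[n]$. I also expect nontrivial bookkeeping in handling the degenerate strata — several critical cycles, non-simple optimal walks — on low-dimensional cones, in verifying that $\mathcal{CF}[n]$ is a lattice with the stated order, and in pinning down the isomorphism (versus anti-isomorphism) by fixing orientations consistently throughout.
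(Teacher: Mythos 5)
You have correctly identified both the reduction (via $\bar A = A - \lambda(A)\,\1\1^T$ and the Kleene star) and the central obstruction: because $\lambda$ is nested inside $B^\ast$, the entries $B^\ast_{ij}$ are not convex in $A$, so polytopality of the linearity fan is not formal. But your proposal then leaves exactly that step unproved. The construction you offer --- take the convex hull of the vectors $\sum_{R\in S}c_R$ and ``check that $\langle\cdot,A\rangle$ is maximised over these vertices exactly on $\sigma_S$'' --- is the naive support-function argument you have just explained cannot work: since the target piecewise-linear data is not convex, there is no reason the maximum of these linear functionals should select the correct cone, and you explicitly concede the point (``Granting this\ldots''). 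The paper's resolution, which is the one idea your outline is missing, is to work entirely inside the cone $\mathcal{J}_n$ of matrices with no positive cycles, where $\lambda$ plays no role: there each column $A^\ast_{\cdot i^\ast}$ is the value function of a longest-path linear program, so its linearity fan $\mathcal{K}_{n,i^\ast}$ is the normal fan of the LP's feasible polyhedron, and the common refinement $\mathcal{K}_n$ is the normal fan of a Minkowski sum of polyhedra with pointed, full-dimensional recession cone. The normalization $A\mapsto\bar A$ lands on the boundary of $\mathcal{J}_n$, so $\mathcal{F}_n$ is obtained by \emph{flattening} $\mathcal{K}_n$ along $-\1\1^T$ and restoring the lineality space; Lemma \ref{lem:polytope} shows that the flattening of the normal fan of such a polyhedron along an interior direction is again polytopal (slice the polyhedron with a hyperplane $\mathcal{H}_M$ beyond its bounded part). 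This sidesteps the coherence-across-eigenvalue-walls problem entirely instead of trying to force it combinatorially.

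A second, smaller gap: your step (c), that $\sigma_S\cap\sigma_{S'}=\sigma_{S''}$ with $S''$ the join, is asserted but is genuinely delicate. In the paper this is Lemma \ref{lem:psi.not.im} and Proposition \ref{prop:vee}: for individual connected relations one only has $\psi(G\cup H)\subseteq\psi(G)\cap\psi(H)$, with equality failing in general (already for $n=2$), and the triple-node condition (c) of Definition \ref{defn:ccr} together with the iterative cycle-adjoining in Algorithm \ref{alg:join.on.CF} exist precisely to repair this. Without that analysis you cannot conclude that the cones $\sigma_S$ close up into a fan whose face lattice is (anti-)isomorphic to $\mathcal{CF}[n]$, nor settle the iso-versus-anti-iso bookkeeping you flag at the end.
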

Our proof provides an explicit formula for $\mathcal{F}_n$, see (\ref{eqn:fn}). Theorem ~\ref{thm:buddy} addresses all the unresolved issues in \cite{ST11}, namely that of a combinatorial indexing for the lower dimensional cones of $\mathcal{P}_n$, and the existence of a natural fan refinement which preserves information on the eigenspace. 

\begin{cor}\label{cor:buddy}
The fan $\mathcal{F}_n$ in Theorem \ref{thm:buddy} refines the partition $\mathcal{P}_n$ appearing in \cite{ST11}. The coarsening to $\mathcal{P}_n$ of the lattice isomorphism in Theorem \ref{thm:buddy} produces an order-preserving bijection from the closed cones of $\mathcal{P}_n$ to the poset of compatible sets of connected relations, denoted $\mathcal{CP}[n]$.
\end{cor}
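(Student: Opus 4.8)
The plan is to deduce the corollary from Theorem~\ref{thm:buddy}, from the combinatorial description of $\mathcal{P}_n$ recalled from \cite{ST11}, and from the inclusion $Eig(A) \subseteq Pol(A)$. The guiding principle is that the tropical eigenspace is recovered from the polytrope by keeping only the tropical vertices supported on critical cycles, and that this selection is already encoded in a complete set of connected relations. So on the relative interior of a cone $\sigma$ of $\mathcal{F}_n$, where that data is constant, the same linear functionals that Theorem~\ref{thm:buddy} produces for $A \mapsto Pol(A)$ also compute $A \mapsto Eig(A)$ there, after discarding the coordinates of the unselected vertices; in particular $A \mapsto Eig(A)$ is linear on $\mathrm{relint}(\sigma)$.

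\emph{Refinement.} By \cite{ST11}, the cone of $\mathcal{P}_n$ containing a matrix $A$ is pinned down by a combinatorial invariant of $A$ (its connected function in the generic case, and more generally the data refining to its compatible set of connected relations), and this invariant is a coarsening of the complete set of connected relations attached to $A$ in Theorem~\ref{thm:buddy}. Since the latter is constant on $\mathrm{relint}(\sigma)$, so is the former; hence $\mathrm{relint}(\sigma)$, and therefore its closure $\sigma$, lies in a single closed cone of $\mathcal{P}_n$. Equivalently every closed cone of $\mathcal{P}_n$ is a union of closed cones of $\mathcal{F}_n$, which is the assertion that $\mathcal{F}_n$ refines $\mathcal{P}_n$. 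Combinatorially, this refinement is witnessed by the forgetful map $\pi \colon \mathcal{CF}[n] \to \mathcal{CP}[n]$ discarding the relational data invisible to $Eig$; I would check that $\pi$ is well defined, surjective, and order preserving, and that for $\sigma$ a cone of $\mathcal{F}_n$ and $C$ a cone of $\mathcal{P}_n$ one has $\sigma \subseteq C$ iff $\pi$ sends the label of $\sigma$ to the label of $C$.

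\emph{The coarsened bijection.} Let $\Phi$ denote the lattice isomorphism of Theorem~\ref{thm:buddy} from the face lattice of $\mathcal{F}_n$ to $\mathcal{CF}[n]$. Declare two faces of $\mathcal{F}_n$ equivalent when they lie in the same cone of $\mathcal{P}_n$; by the refinement step the closure of each equivalence class is exactly a closed cone of $\mathcal{P}_n$, and every closed cone of $\mathcal{P}_n$ arises once. Transporting this equivalence through $\Phi$, its classes are precisely the fibers of $\pi$, so $\pi$ descends to a bijection between the closed cones of $\mathcal{P}_n$ and $\mathcal{CP}[n]$. It remains to see this bijection is order preserving: an inclusion $\bar{C}_1 \subseteq \bar{C}_2$ of cones of $\mathcal{P}_n$ lifts to an inclusion among the corresponding unions of faces of $\mathcal{F}_n$, and $\Phi$ being an order isomorphism together with the order-preservation of $\pi$ turns this into the order on $\mathcal{CP}[n]$; conversely an order relation in $\mathcal{CP}[n]$ lifts along $\pi$ to a face inclusion in $\mathcal{F}_n$ and pushes down to a cone inclusion in $\mathcal{P}_n$. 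This is the coarsening asserted in the statement.

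\emph{Main obstacle.} The only substantive work beyond Theorem~\ref{thm:buddy} is the comparison of two combinatorial models: the critical-cycle and connected-function data governing $Eig$ in \cite{ST11}, and the full relational data governing $Pol$ here. One must show that the former is obtained from the latter by dropping the non-critical relations, that this passage coincides with $\pi$, and that $\pi$ is order preserving with fibers matching the closed cones of $\mathcal{P}_n$; proving that $\pi$ is surjective (i.e.\ that every compatible set of connected relations lifts to a complete one) is the delicate point. Once this dictionary is in place, the remaining assertions are formal facts about a fan refining a partition and about quotients of finite posets.
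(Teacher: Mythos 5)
Your proposal is correct and follows essentially the same route as the paper: the paper's proof also identifies $Eig(A)$ as the critical-cycle-supported tropical vertices of $Pol(A)$ via Proposition~\ref{prop:kleene}, so that the eigenvector map depends only on the $\D$-part of $\G = (\D,\Ecal)$, and the coarsening is exactly your forgetful map $\G \mapsto \D$. The one point you flag as delicate, surjectivity of this map, is handled in the paper by fiat, since $\mathcal{CP}[n]$ is \emph{defined} as the set of all $\D$ arising from decompositions of complete sets.
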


Theorem \ref{thm:buddy} converts the study of cones in $\mathcal{F}_n$ and $\mathcal{P}_n$ to the study of $\mathcal{CF}[n]$ and $\mathcal{CP}[n]$, which essentially are collections of graphs with constraints. From these sets of graphs, one can recover the defining equations and inequalities of the corresponding cones by applying the lattice isomorphism, which is a simple computation given in Algorithm \ref{alg:psi}. Cone intersections, for example, correspond to taking the join of two elements in $\mathcal{CF}[n]$ as described in Algorithm ~\ref{alg:join.on.CF}. Coarser information on a cone such as its codimension is given by an explicit formula (Proposition \ref{prop:codim}). 

\subsection*{Connections to literature}
As noted in \cite{JoswigK10}, it follows from \cite[Proposition 18]{DS04} that all polytropes arise as $Pol(A)$ for some matrix $A$. This was also known to Sergeev, Schneider and Butkovic \cite{SSB09}, and further studied in \cite{SS08}. Full-dimensional polytropes, also known as tropical simplices, are of great importance to tropical polyhedral geometry, see \cite{JoswigK10} and references therein. 

The face lattice of polytropes provides a natural combinatorial classification which coincides with Green's D-relation on the semigroup of square tropical matrices \cite{kambite}. On the other hand, one can declare two polytropes to be equivalent if the corresponding matrices have finite entries and they lie in the relative interior of the same cone in $\mathcal{F}_n$. We term this the \emph{graphical type} of a polytrope, and the former its \emph{combinatorial type}. They are not directly comparable. We do not know if there exists an indexing set equivalent to $\mathcal{CF}[n]$ for the combinatorial type of polytropes. Such a set may shed light on the semigroup structure of tropical matrices, a topic intimately tied in with the `tropical' (ie: Brazilian) origins of tropical geometry in the work of Imre Simon \cite{simon}. 

\subsection*{Organization} In Section \ref{sec:basic} we explicitly construct $\mathcal{F}_n$ and prove that it is the desired polytopal fan. In Section \ref{sec:prelim} we define $\mathcal{CF}[n]$ and $\mathcal{CP}[n]$, the complete and compatible sets of connected relations, respectively. Section \ref{sec:proof} proves the remaining statement of Theorem ~\ref{thm:buddy} and Corollary \ref{cor:buddy} by constructing the lattice anti-isomorphism $\Psi$ from $\mathcal{CF}[n]$ to the face lattice of $\mathcal{F}_n$. We give an algorithm to compute $\Psi$ in Section ~\ref{sec:codim}, prove a formula which gives the co-dimension of the cone from its graphical encoding, and discuss symmetries of the fan $\mathcal{F}_n$. We conclude with examples in Section \ref{sec:ex}. 

\subsection*{Notation} Throughout this paper, a \emph{graph} is a directed graph, allowing self-loops but not multiple edges. By a \emph{connected} graph we mean weakly connected. The \emph{subgraph rooted at a node $u$} is the set of nodes and edges belonging to paths which flow into $u$. A collection of nodes $S$ in a graph $G$ is a \emph{strong component} if the induced subgraph on $S$ is strongly connected. A strong component $S$ is a \emph{sink component} if there are no directed edges from $S$ to its complement. We use the term \emph{multigraph} to mean a directed graph with multiple distinct arcs, that is, it consists of an ordered 4-tuple $(V, E, s, t)$ where $V$ is the set of nodes, $E$ is the set of edges, $s: E \to V$ is a map assigning to each edge its source node, and $t: E \to V$ is a map assigning to each edge its target node. The \emph{contraction} $\tilde{G}$ of a connected graph $G$ is a multigraph whose nodes are indexed by the sink components of $G$, and whose edge set, source and target maps are induced by the edges of $G$. An \emph{in-directed tree} is a tree whose edges are oriented towards the root. The \emph{graph of a matrix $A \in \R^{n \times n}$} is the weighted graph with edge weights $A_{ij}$. For a path $P_{ii'}$ from $i$ to $i'$, let $|P_{ii'}|$ denote the number of edges, $A(P_{ii'})$ denote the sum of all edge weights along the path in the graph of $A$. We say that two paths are disjoint if they do not share any edges. For disjoint paths $P_{ii'}, P_{i'j}$, we write $P_{ii'} + P_{i'j}$ for their concatenation. For two fans $\mathcal{F}_1, \mathcal{F}_2$, let $\mathcal{F}_1 \wedge \mathcal{F}_2$ denote their common refinement. 

\section{Construction of $\mathcal{F}_n$}\label{sec:basic}

In this section we give an explicit construction of $\mathcal{F}_n$, and show that it is polytopal. We first need a small result in polyhedral geometry which gives one way of constructing a polytopal fan.  

\begin{defn} Let $\mathcal{F}$ be a fan refinement of a convex, full-dimensional, pointed polyhedral cone ~$C$ in $\R^n$, $v \in \R^n$ a vector in its interior. A cone of $\mathcal{F}$ is an \emph{outer cone} if it is a subset of the boundary of $C$. The \emph{flattening of $\mathcal{F}$ along $v$} is the fan in $\R^{n-1}$ whose cones are the images of the outer cones of $\mathcal{F}$ projected onto the orthogonal complement of $v$.
\end{defn}

\begin{lem}\label{lem:polytope} Suppose $\mathcal{F}$ is the normal fan of some convex polyhedron $P$ in $\R^n$ with a pointed, full-dimensional recession cone. Let $v$ be a vector in the interior of $\mathcal{F}$. Then the flattening of $\mathcal{F}$ along $v$ is a polytopal fan in $\R^{n-1}$. 
\end{lem}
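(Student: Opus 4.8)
The plan is to show that the flattening of $\mathcal{F}$ along $v$ is itself the normal fan of a polytope, namely a suitable slice of $P$ (or of a polytope derived from $P$) transverse to the direction $v$. First I would set up coordinates so that $v$ spans a distinguished axis and identify $\R^{n-1}$ with $v^\perp$. Since $\mathcal{F}$ is the normal fan of $P$, every cone $\sigma$ of $\mathcal{F}$ is the normal cone $N_F(P)$ of a unique face $F$ of $P$, and the inclusion-reversing correspondence $F \leftrightarrow \sigma$ is a lattice anti-isomorphism between the face lattice of $P$ and $\mathcal{F}$. The key observation is that the condition ``$v$ lies in the interior of the support $C = |\mathcal{F}|$'' means exactly that $v$ lies in the interior of the recession cone of $P$ — equivalently, $v$ is in the normal cone of the (unique) vertex... no: more precisely, $v$ being interior to $C$ means the linear functional $\langle v, \cdot\rangle$ is bounded below on $P$ and attains its minimum at a vertex $w$. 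Actually $C$ is the normal cone at the recession cone's dual; the cleanest statement is that $\langle v, \cdot \rangle$ attains a minimum on $P$ at a single vertex $w_0$, whose normal cone is full-dimensional and contains $v$ in its interior.

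The main step is then to identify the outer cones of $\mathcal{F}$ — those contained in $\partial C$ — with the faces of $P$ that are \emph{unbounded}, i.e. whose recession cone is positive-dimensional. Indeed, a normal cone $N_F(P)$ meets $\partial C$ iff its relative interior is disjoint from the interior of $C$ iff $F$ is not the minimizing vertex and, more to the point, $N_F(P)$ lies in $\partial C$ precisely when $F$ is an unbounded face. I would next argue that projecting these outer cones onto $v^\perp$ produces exactly the normal fan of the polytope $Q := P \cap \{\langle v, x\rangle = c\}$ for $c$ large (or any generic value), viewed inside the hyperplane $v^\perp$ (translated). Concretely: an unbounded face $F$ of $P$ has a recession cone containing a ray in direction $r$ with $\langle v, r\rangle > 0$, so $F \cap \{\langle v, x\rangle = c\}$ is a nonempty face of $Q$, and this sets up a bijection between unbounded faces of $P$ and faces of $Q$ that is inclusion-preserving; dually the projection of $N_F(P)$ onto $v^\perp$ is the normal cone (in $v^\perp$) of the corresponding face of $Q$. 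That $Q$ is an honest polytope (bounded) follows from the hypothesis that the recession cone of $P$ is pointed and full-dimensional: boundedness of the slice is equivalent to $v$ being interior to the dual of the recession cone, which is the support $C$.

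The step I expect to be the main obstacle is verifying that the projection map on cones is well-behaved — that distinct outer cones project to distinct cones with disjoint interiors, that the projected cones genuinely tile $v^\perp$, and that the projection of $N_F(P)$ equals the normal cone of the slice face rather than something larger. This requires using pointedness of the recession cone (so that no outer cone ``wraps around'' and no two of them overlap after projection) and full-dimensionality (so the slice is full-dimensional in $v^\perp$ and the fan is complete). I would handle this by working on the level of the face lattices: establish the inclusion-preserving bijection between unbounded faces of $P$ and all faces of $Q$, check it restricts correctly on the recession-cone data, and then invoke the standard fact that a fan is the normal fan of a polytope as soon as it is the image of such a face-lattice correspondence with the right dimension count. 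Once the combinatorial bijection and the cone identification are in place, the conclusion that the flattening is polytopal — indeed, $= \mathcal{N}(Q)$ — is immediate.
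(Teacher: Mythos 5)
Your proposal is correct and follows essentially the same route as the paper: identify the outer cones of $\mathcal{F}$ with the unbounded faces of $P$, slice $P$ with a hyperplane orthogonal to $v$ placed far enough out that it meets only the unbounded faces, and recognize the flattening as the normal fan of that slice viewed in $v^\perp$. The one caution is your parenthetical ``or any generic value'' for the slicing level $c$: the level must be chosen beyond the bounded part $Q$ of the decomposition $P = Q + C$ (the paper's constant $M$ does exactly this), since a hyperplane cutting through $Q$ would produce slice faces coming from bounded faces of $P$ and break the correspondence with the outer cones.
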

\begin{proof} Implicitly the lemma claimed that $\mathcal{F}$ is a fan refinement of a full-dimensional, pointed polyhedral cone. To see this, let $P$ be the polyhedron in the lemma. Write $P = Q + C$ where $C$ is its recession cone, $Q$ a polytope. By definition, the polar $C^\circ$ is the normal fan of $C$. Since $C$ is full-dimensional, the facet normals of $Q$ lie in the interior of $C^\circ$. Thus the normal fan of $P$ is a fan refinement of the polyhedral cone $C^\circ$. 

An $(n-1)$-dimensional unbounded face of $P$ is a face of $Q$ plus a face of $C$, hence its normal vector is an outer ray of $\mathcal{F}$. An $(n-1)$-dimensional bounded face of $P$ is necessarily a face of $Q$, whose facet normal must lie in the interior of $C^\circ$. Thus the outer rays of $\mathcal{F}$ are precisely the facet normals of the unbounded faces of $P$. 

We now explicitly construct the polytope in question. Let $\mathcal{H}$ denote the orthogonal complement of $v$. Under the convention that the facet normals point outwards, the linear functional $x \mapsto \langle v, x \rangle$ on $\R^n$ is bounded above on $P$, and bounded below on $Q$. Thus there exists $M$ sufficiently small such that $\langle v, x \rangle > M$ for all $x \in Q$. Let $\mathcal{H}_M = \mathcal{H}+M$. View $P_M = P \cap \mathcal{H}_M$ as a polytope in the $(n-1)$-dimensional affine subspace $\mathcal{H}_M$. We claim that $P_M$ is the polytope needed. Indeed, by construction, the $i$-dimensional faces of $P_M$ are the unbounded $(i+1)$-dimensional faces of $P$ intersected with $\mathcal{H}_M$. Thus it is sufficient to show that the projected outer rays of $\mathcal{F}$ along $v$ are the facet normals of the corresponding faces in $P_M$. 
Let $x$ be a vector in an $(n-2)$-dimensional face of $P_M$, $f$ the corresponding outer ray of $\mathcal{F}$, and $\Pi f$ the orthogonal projection of $f$ onto $\mathcal{H}$. Since $f - \Pi f$ is orthogonal to $\mathcal{H}$, $\langle x, f - \Pi f\rangle = 0$. Thus $\langle x, \Pi f\rangle = \langle x, f \rangle = 0$. Therefore, the rays $\Pi f$ are the facet normals of $P \cap \mathcal{H}_M$. 
\end{proof}

Let $\Gamma_n$ be the set of simple directed cycles on the complete digraph on $n$ nodes, including self-loops. For $C \in \Gamma_n$, let $\chi_C \in \{0,1\}^{n^2}$ be its incidence vector. Let $\mathcal{J}_n$ be the polyhedral cone of matrices in $\R^{n \times n}$ with no positive cycles
$$ \mathcal{J}_n = \{A \in \R^{n \times n}: A\cdot \chi_C \leq 0, C \in \Gamma_n \}. $$
Its lineality space $V_n$ has dimension $(n-1)$, consisting of matrices with zero-cycle sum.
$$ V_n = \{A \in \R^{n \times n}: A\cdot \chi_C = 0, C \in \Gamma_n\}. $$
For any $A \in \mathcal{J}_n$, define $ A^+ := A \oplus A^2 \oplus \ldots, \mbox{ and }\hspace{0.5em} A^\ast := I \oplus A^+, $
where $I$ is the matrix with $0$ on the diagonal and $-\infty$ elsewhere. 

The map $A \mapsto A^\ast$ is the \emph{Kleene star} map, and $A^\ast$ is the \emph{Kleene star} of $A$ \cite{SS08}. This map is elementwise finite if an only if $A \in \mathcal{J}_n$. Kleene stars provide a very neat characterization of extreme points of $Eig(A)$ and $Pol(A)$, as well as lending them the combinatorial interpretations crucial to our construction of $\mathcal{F}_n$. 
For a proof of the following result, see \cite[\S 3]{SS08}.
\begin{prop}\label{prop:kleene}
For an $n \times n$ matrix $A$, define $\bar{A} := A \odot (-\lambda(A))$. Then the extreme tropical vertices of $Pol(A)$, up to tropical scaling, are precisely the columns of $\bar{A}^\ast$, and the extreme tropical vertices of $Eig(A)$ are precisely the common columns of $(\bar{A})^+$ and $\bar{A}^\ast$. 
\end{prop}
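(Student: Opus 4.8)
The plan is to reduce to the normalized matrix $\bar A:=A\odot(-\lambda(A))$ and then use the longest-path description of the Kleene star. Since $\lambda(\bar A)=0$, the matrix $\bar A$ has no positive cycles, i.e. $\bar A\in\mathcal{J}_n$, so $\bar A^\ast$ is finite, $\bar A^\ast=\bigoplus_{k=0}^{n-1}\bar A^k$, the entry $\bar A^\ast_{ij}$ is the maximum weight of a walk from $j$ to $i$ in the graph of $\bar A$ (the empty walk, of weight $0$, being allowed when $i=j$), and $\bar A^\ast_{jj}=0$ for every $j$. Subtracting $\lambda(A)$ from every entry of $A$ and from both sides of (\ref{eqn:evec}) and (\ref{eqn:pol}) shows $Pol(A)=Pol(\bar A)$ and $Eig(A)=Eig(\bar A)$, so I may assume $\lambda(A)=0$ and $A\in\mathcal{J}_n$. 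Because such an $A$ has a cycle of weight $0$, telescoping the inequalities $A_{ij}+x_j-x_i\le 0$ around that cycle forces the maximum in (\ref{eqn:pol}) to be attained, so $x\in Pol(A)$ iff $A_{ij}+x_j\le x_i$ for all $i,j$, and $x\in Eig(A)$ iff moreover every row attains equality for some $j$.

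For the polytrope part I would argue in three steps. (i) Each column $c=A^\ast_{\cdot j}$ lies in $Pol(A)$: prepending an edge $k\to i$ to an optimal walk $j\to k$ gives $A_{ik}+A^\ast_{kj}\le A^\ast_{ij}$, i.e. $A\odot A^\ast\le A^\ast$, which is exactly the statement $c\in Pol(A)$. (ii) $Pol(A)$ is the tropical span of the columns of $A^\ast$: for $x\in Pol(A)$, telescoping $A_{ij}+x_j\le x_i$ along an optimal walk $j\to i$ gives $A^\ast_{ij}+x_j\le x_i$, while $j=i$ gives equality, so $x_i=\max_j(A^\ast_{ij}+x_j)$, i.e. $x=\bigoplus_j x_j\odot A^\ast_{\cdot j}$. (iii) Extremality: if $A^\ast_{\cdot j}=\bigoplus_{k\ne j}\mu_k\odot A^\ast_{\cdot k}$, reading off coordinate $j$ and using $A^\ast_{jj}=0$ produces an index $k$ with $\mu_k=-A^\ast_{jk}$, and comparing the remaining coordinates yields $A^\ast_{ij}-A^\ast_{ik}=-A^\ast_{jk}$ for all $i$; that is, columns $j$ and $k$ represent the same point of $\mathbb{TP}^{n-1}$, and a short check shows this occurs exactly when $j$ and $k$ lie on a common cycle of weight $0$. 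Hence, up to tropical scaling, the columns of $A^\ast$ are precisely the tropical vertices of $Pol(A)$.

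For the eigenspace part the key point is the chain of equivalences: column $j$ of $A^+$ equals column $j$ of $A^\ast$ $\iff$ $A^+_{jj}=0$ $\iff$ $j$ lies on a cycle of weight $0$ (``$j$ is critical''); indeed $A^+$ and $A^\ast$ agree off the diagonal and $A^\ast_{jj}=0\ge A^+_{jj}$. These are the columns common to $A^+$ and $A^\ast$. For critical $j$ one then has $A\odot A^\ast_{\cdot j}=A^+_{\cdot j}=A^\ast_{\cdot j}$, so $A^\ast_{\cdot j}\in Eig(A)$, and since it is a tropical vertex of the larger set $Pol(A)\supseteq Eig(A)$ it is a tropical vertex of $Eig(A)$. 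Conversely, given $x\in Eig(A)$ I would pick for each $i$ an index $\sigma(i)$ attaining $\max_j(A_{ij}+x_j)=x_i$; iterating $\sigma$ from any node leads into a cycle $C$, and telescoping $x_i=A_{i\sigma(i)}+x_{\sigma(i)}$ around $C$ gives $A(C)=0$, so $C$ is critical. Thus every $i$ reaches a critical node $j$ along a walk all of whose steps are tight, whence $x_i=A^\ast_{ij}+x_j$; combined with $x\in Pol(A)$ this gives $x_i=\max_{j\ \mathrm{critical}}(A^\ast_{ij}+x_j)$, so $x$ is in the tropical span of the critical columns. Together with the first half, $Eig(A)$ has as tropical vertices exactly the columns common to $A^+$ and $A^\ast$.

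I expect the main obstacle to be bookkeeping rather than a single deep idea: one must set up the longest-walk interpretation of $A^\ast$, run the two cycle-telescoping arguments (for $Pol(A)$ and for $Eig(A)$) correctly, and --- the point most easily missed --- check that the maxima in (\ref{eqn:evec}) and (\ref{eqn:pol}) genuinely equal $\lambda(A)$ rather than only $\le\lambda(A)$, which is exactly where $A\in\mathcal{J}_n$ (the existence of a critical cycle after normalization) enters. The remaining delicate point is the characterization of when two columns of $A^\ast$ coincide in $\mathbb{TP}^{n-1}$, which is what makes the phrase ``up to tropical scaling'' precise.
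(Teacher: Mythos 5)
Your proof is correct and is essentially the standard argument: the paper does not prove this proposition itself but defers to \cite[\S 3]{SS08}, where the same reduction to $\lambda(A)=0$, the longest-walk reading of $\bar A^\ast$, the identity $A\odot A^\ast=A^+$, and the critical-cycle telescoping for $Eig(A)$ all appear. The only point to watch is the direction convention --- the paper reads $A^\ast_{ij}$ as the value of an optimal path from $i$ to $j$, while you read it as a walk from $j$ to $i$ --- but your usage is internally consistent throughout and does not affect the validity of any step.
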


Thus to study the polytrope map, we need to understand cones of linearity of the eigenvalue map $A \mapsto \lambda(A)$ and the Kleene star map. The former is a classic result of Cunninghame-Green \cite{Cg62}, which we reformulate in view of Lemma \ref{lem:polytope} as follows.

\begin{prop}\label{prop:Nn} Let $\partial \mathcal{J}_n$ be the flattening of $\mathcal{J}_n$ along $-\1\1^T$. Define
$$ \mathcal{N}_n = \partial \mathcal{J}_n + \spn\{\1\1^T\} + V_n, $$
where $+$ denotes direct sum of subspaces. Then $\mathcal{N}_n$ is a polytopal fan (the normal fan of a polytope) in $\R^{n \times n}$. Its cones are precisely the cones of linearity of the eigenvalue map. 
\end{prop}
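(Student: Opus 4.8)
The plan is to read off the cones of linearity of $\lambda$ from Cunninghame-Green's min--max formula, recognise them as the normal fan of an explicit polytope, and then match that normal fan with the construction in the statement by invoking Lemma~\ref{lem:polytope}.

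First, Cunninghame-Green's theorem says $\lambda(A)=\max_{C\in\Gamma_n}\frac{A\cdot\chi_C}{|C|}$, the maximum normalised cycle weight. Thus $\lambda$ is the support function of the polytope $\Delta_n:=\conv\{\chi_C/|C|:C\in\Gamma_n\}$. Each generator $\chi_C/|C|$ has coordinate sum $1$ and lies in $V_n^\perp=\spn\{\chi_C:C\in\Gamma_n\}$ (note $\1\1^T\in V_n^\perp$, since $\langle A,\1\1^T\rangle$ vanishes on $V_n$), so $\Delta_n$ is $n(n-1)$-dimensional with affine hull $V_n^\perp\cap\{\langle\,\cdot\,,\1\1^T\rangle=1\}$. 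By the classical duality between support functions and normal fans, the cones of linearity of $\lambda$ are precisely the cones of the normal fan $\mathcal{N}(\Delta_n)$, which is by definition a polytopal fan. Everything then reduces to the identification $\mathcal{N}(\Delta_n)=\mathcal{N}_n$.

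Next I would set up the polar picture. The cone $\mathcal{K}_n:=\mathrm{cone}\{\chi_C:C\in\Gamma_n\}$ is pointed and full-dimensional inside $V_n^\perp$, and $\mathcal{J}_n\cap V_n^\perp$ is exactly its polar there; equivalently $\mathcal{K}_n=(\mathcal{J}_n\cap V_n^\perp)^\circ$. Hence the face fan of $\mathcal{J}_n\cap V_n^\perp$ --- which, after quotienting the lineality $V_n$, is what the statement means by the fan ``$\mathcal{J}_n$'' --- is the normal fan of the polyhedron $\mathcal{K}_n$, whose recession cone is $\mathcal{K}_n$ itself, pointed and full-dimensional. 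Since $\langle-\1\1^T,\chi_C\rangle=-|C|<0$ for every cycle, $-\1\1^T$ lies in the interior of this fan, so Lemma~\ref{lem:polytope} applies and shows that the flattening along $-\1\1^T$, i.e.\ $\partial\mathcal{J}_n$, is a polytopal fan in the $n(n-1)$-dimensional space $V_n^\perp\cap(\1\1^T)^\perp$. Moreover, the hyperplane cross-section of $\mathcal{K}_n$ produced inside the proof of Lemma~\ref{lem:polytope} is, up to a positive scaling, the section $\mathcal{K}_n\cap\{\langle\,\cdot\,,\1\1^T\rangle=1\}$, which equals $\Delta_n$ by the coordinate-sum computation above. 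Therefore $\partial\mathcal{J}_n=\mathcal{N}(\Delta_n)$ as fans in $V_n^\perp\cap(\1\1^T)^\perp$; since the affine hull of $\Delta_n$ is orthogonal to precisely $\spn\{\1\1^T\}+V_n$, adjoining those lineality directions gives $\mathcal{N}_n=\partial\mathcal{J}_n+\spn\{\1\1^T\}+V_n=\mathcal{N}(\Delta_n)$, which is polytopal (the normal fan of $\Delta_n$ viewed inside $\R^{n\times n}$). Combined with the previous paragraph, the cones of $\mathcal{N}_n$ are exactly the cones of linearity of $\lambda$.

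The main obstacle is the geometric bookkeeping in the last paragraph: one must pass cleanly between the three polar-dual objects --- the faces of $\mathcal{J}_n$, the faces of $\mathcal{K}_n=\mathrm{cone}\{\chi_C\}$, and the faces of $\Delta_n$ --- handle the fact that $\mathcal{J}_n$ is not pointed by descending to $V_n^\perp$, and, most delicately, verify that the orthogonal-projection ``flattening'' of the boundary cones of $\mathcal{J}_n$ reproduces exactly the normal cones of the cross-section $\Delta_n$, neither coarser nor finer. Cunninghame-Green's formula and the support-function/normal-fan duality enter only as standard black boxes.
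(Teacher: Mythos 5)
Your proposal is correct and follows essentially the same route as the paper: Cunninghame-Green's formula exhibits $\lambda$ as the support function of the normalized cycle polytope $\conv\{\chi_C/|C|\}$, and Lemma~\ref{lem:polytope} applied to $\mathcal{J}_n^\circ=\mathrm{cone}\{\chi_C\}$ identifies $\partial\mathcal{J}_n$ (plus the lineality directions $\spn\{\1\1^T\}+V_n$) with the normal fan of that polytope. The only differences are cosmetic --- you handle the non-pointedness of $\mathcal{J}_n$ by descending to $V_n^\perp$ a bit more explicitly, and your auxiliary cone $\mathrm{cone}\{\chi_C\}$ should not be named $\mathcal{K}_n$, which the paper reserves for the Kleene-star fan.
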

\begin{proof}
The rays of the polar $\mathcal{J}_n^\circ$ are the incidence vectors $\{\chi_C: C \in \Gamma_n\}$. Since all faces of $\mathcal{J}_n$ are unbounded except the $0$ vertex, we can choose any constant $M < 0$ for the flattening, say, $M = -1$. Now $\mathcal{J}_n^\circ \cap \mathcal{H}_{-1}$ is the normalized cycle polytope
$$ \mathcal{C}_n = \conv\left\{\frac{1}{|C|}\chi_C: C \in \Gamma_n\right\}.$$
By Lemma \ref{lem:polytope}, $\partial \mathcal{J}_n$ is the normal fan of $\mathcal{C}_n$ viewed as an $(n^2 - n)$-dimensional polytope in the affine subspace $\mathcal{H}_{-1}$. One can check that $\R^{n\times n} = \mathcal{H}_{-1} + \spn\{\1\1^T\} + V_n$, thus $\mathcal{N}_n$ is the normal fan of $\mathcal{C}_n$ in $\R^{n \times n}$. By \cite{Cg62}, the eigenvalue function $A \mapsto \lambda(A)$ is the support function of $\mathcal{C}_n$ in $\R^{n \times n}$. Thus $\mathcal{N}_n$ is precisely the fan of linearity of this map.  
\end{proof}
We now consider the Kleene star map $A \mapsto A^\ast$. By definition, $A^\ast_{ij}$ is the value of the maximal path from $i$ to $j$ in the graph of $A$. For a fixed column $i^\ast$, over $\mathcal{J}_n$, the map $A \mapsto A^\ast_{\cdot i^\ast}$ is precisely the solution to the classical linear program of single-target longest path (more commonly formulated as single-source shortest path) \cite{networkBook}.
\vskip -1em
\begin{align*}
\mbox{Maximize } & \sum_{i,j=1}^n A_{ij}x_{ij}  \\
\mbox{ subject to } & x_{ij} \geq 0, \sum_{j=1}^n x_{ji} - \sum_{j=1}^n x_{ij} = \left\{ 
\begin{array}{clc}
n - 1 & \mbox{ for } & i = i^\ast \\
-1 & \mbox{ for } & i \neq i^\ast 
\end{array}
\right..
\end{align*}

This linear program is bounded if and only if $A \in \mathcal{J}_n$. Its constraint set is a polyhedron in $\mathbb{R}^{n \times n}$ with lineality space $V_n$, whose vertices are in bijection with in-directed spanning trees on $n$ nodes with root $i^\ast$ \cite{networkBook}. 
Thus the cones of linearity of $A \mapsto A^\ast_{\cdot i^\ast}$ form a fan partition of $\mathcal{J}_n$. Denote this fan $\mathcal{K}_{n,i^\ast}$. Take the common refinement over all columns $i^\ast$, we obtain the fan
$$\mathcal{K}_n = \bigwedge_{i^\ast=1}^n\mathcal{K}_{n,i^\ast}.$$
\begin{cor}\label{cor:kn}
$\mathcal{K}_n$ is the fan of linearity of the Kleene star map on $\mathcal{J}_n$. Furthermore, it is the normal fan of a polyhedron in $\R^{n \times n}$.
\end{cor}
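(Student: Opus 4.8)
The plan is to dispose of the first assertion by a formal argument, and then to get polytopality by writing $\mathcal{K}_n$ as the normal fan of a Minkowski sum of the feasible polyhedra of the longest-path LPs. For the first assertion: regard $A\mapsto A^\ast$ as a map into the product $\prod_{i^\ast=1}^n\R^n$ of its columns. It is given by linear functionals in the entries of $A$ on a cone $\sigma\subseteq\mathcal{J}_n$ if and only if each column map $A\mapsto A^\ast_{\cdot i^\ast}$ is, which by construction happens exactly when $\sigma$ lies inside a single cone of $\mathcal{K}_{n,i^\ast}$, for every $i^\ast$. Hence the inclusion-maximal domains of linearity of the Kleene star map are precisely the maximal cones of $\bigwedge_{i^\ast}\mathcal{K}_{n,i^\ast}=\mathcal{K}_n$, and the lower cones of $\mathcal{K}_n$ are their common faces; so $\mathcal{K}_n$ is the fan of linearity of the Kleene star map on $\mathcal{J}_n$.

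For polytopality, the first step is to identify each $\mathcal{K}_{n,i^\ast}$ with the normal fan of the feasible polyhedron $Q_{i^\ast}\subseteq\R^{n\times n}$ of the single-target longest path LP with target $i^\ast$ displayed above. The optimal value of that LP, read as a function of the cost matrix $A$, is by definition the support function $h_{Q_{i^\ast}}$, and its domains of linearity form, by the standard LP/convex-duality dictionary, the normal fan of $Q_{i^\ast}$. The recession cone of $Q_{i^\ast}$ is the cone of nonnegative circulations, which by flow decomposition equals $\operatorname{cone}\{\chi_C:C\in\Gamma_n\}$; this cone is pointed (each $\chi_C\ge 0$), so $Q_{i^\ast}$ has vertices — the in-directed spanning trees rooted at $i^\ast$, cf. \cite{networkBook} — and its polar is exactly $\{A:A\cdot\chi_C\le 0,\ C\in\Gamma_n\}=\mathcal{J}_n$, so the normal fan of $Q_{i^\ast}$ partitions $\mathcal{J}_n$ and has lineality space $V_n$. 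On the relative interior of a maximal normal cone a unique optimal tree $T$ is selected and $A^\ast_{ji^\ast}=A(P^T_{ji^\ast})$ is the weight of the unique $j\to i^\ast$ path in $T$, a fixed linear functional of $A$; since distinct in-directed spanning trees rooted at $i^\ast$ have distinct edge sets — every tree edge lies on a root path — they induce distinct column maps, and conversely linearity of $A^\ast_{\cdot i^\ast}$ forces linearity of $\sum_j A^\ast_{ji^\ast}=h_{Q_{i^\ast}}$. Thus $\mathcal{K}_{n,i^\ast}$ is precisely the normal fan of $Q_{i^\ast}$.

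The second step is to invoke the standard identity that the common refinement of the normal fans of finitely many polyhedra is the normal fan of their Minkowski sum. All the $Q_{i^\ast}$ share the recession cone $\operatorname{cone}\{\chi_C\}$, so $Q:=Q_1+\cdots+Q_n$ is again a polyhedron in $\R^{n\times n}$ with that pointed recession cone, hence with vertices, and with normal fan supported on $\mathcal{J}_n$. Therefore $\mathcal{K}_n=\bigwedge_{i^\ast=1}^n\mathcal{K}_{n,i^\ast}$ is the normal fan of $Q$, which is the second assertion.

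The routine parts are the flow-decomposition description of the recession cone and the tree/path bookkeeping. The hard part will be the polyhedral-geometry dictionary for \emph{unbounded} polyhedra: the textbook statements relating support functions, normal fans, Minkowski sums and common refinements are usually stated for polytopes, so one must carry the recession cones and the common $(n-1)$-dimensional lineality space $V_n$ through every step — in particular verifying that it is the full normal fan of $Q_{i^\ast}$, and not some coarsening of it, that equals the fan of linearity of $A\mapsto A^\ast_{\cdot i^\ast}$. (Alternatively one could flatten along $-\1\1^T$ as in Lemma~\ref{lem:polytope} to reduce to polytopes, but since the corollary only claims a polyhedron the Minkowski-sum route is cleaner.)
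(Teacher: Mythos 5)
Your proposal is correct and follows the same route as the paper: identify each $\mathcal{K}_{n,i^\ast}$ with the normal fan of the feasible polyhedron of the single-target longest-path LP, and realize the common refinement $\mathcal{K}_n$ as the normal fan of the Minkowski sum of those polyhedra. The paper's own proof is a two-line version of this (citing the Minkowski-sum fact for normal fans); your additional bookkeeping --- the circulation-cone description of the recession cone, its polar being $\mathcal{J}_n$, and the check that the fan of linearity of the vector-valued column map coincides with the normal fan of $Q_{i^\ast}$ rather than a coarsening --- is exactly the material the paper leaves implicit, and it is carried out correctly.
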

\begin{proof}
The first statement follows from Proposition \ref{prop:kleene}. For each $i^\ast$, $\mathcal{K}_{n,i^\ast}$ is the normal fan of a polyhedron. Thus $\mathcal{K}_n$ is the normal fan of the Minkowski sum of the corresponding polyhedra, see \cite[\S 2.3]{bernd.trop}. 
\end{proof}

\emph{Proof of Theorem \ref{thm:buddy}, part (a).}
Let $\partial \mathcal{K}_n$ be the flattening of $\mathcal{K}_n$ along $-\1\1^T$. Define
\begin{equation}\label{eqn:fn} 
\mathcal{F}_n = \partial \mathcal{K}_n + \spn\{\1\1^T\} + V_n.
\end{equation}
By Proposition \ref{prop:Nn} and Corollary \ref{cor:kn}, $\mathcal{F}_n$ is the fan of linearity of the polytrope map $A \mapsto Pol(A)$. By Lemma \ref{lem:polytope}, it is a polytopal fan in $\R^{n \times n}$. \qed

The fan $\mathcal{F}_n$ refines $\mathcal{N}_n$, and inherits its lineality space $\spn\{\1\1^T\} + V_n$. We often identify $\mathcal{F}_n$ with $\partial \mathcal{K}_n$, $\mathcal{N}_n$ with $\partial \mathcal{J}_n$, as in Example ~\ref{ex:n=2} and Example \ref{ex:n3}. 

\begin{ex}[$n=2$]\label{ex:n=2} Here $\mathcal{K}_2$ is the trivial fan, and $\mathcal{N}_2 = \mathcal{P}_2 = \mathcal{F}_2$. The fan $\mathcal{N}_2$ has three cones of codimension 0, three cones of codimension 1, and one cone of codimension ~2. Express the matrix $A$ as the vector $(x,y,z,w)$, where $x = A_{11}, y = A_{22}, z = A_{12} + A_{21}, w = A_{12} - A_{21}$. Then $V_2 = \{(x,y,z,w): x=y=z=0\}$, $\mathcal{J}_2 = \{x, y, z \leq 0, w = 0\}$ is the negative orthant, and $\partial \mathcal{J}_2$ is the image of the three quadrants $\{x = w = 0, y,z \leq 0\}$, $\{y = w = 0, x,z \leq 0\}$ and $\{z = w = 0, x,y \leq 0\}$ projected onto the subspace defined by $\{w = 0, x + y + z = 0\}$. This projection results in a fan partition of $\R^2$ seen in Figure \ref{fig:n2}. 

\begin{figure}[h]
	\begin{center}
	 \includegraphics[width=0.8\textwidth]{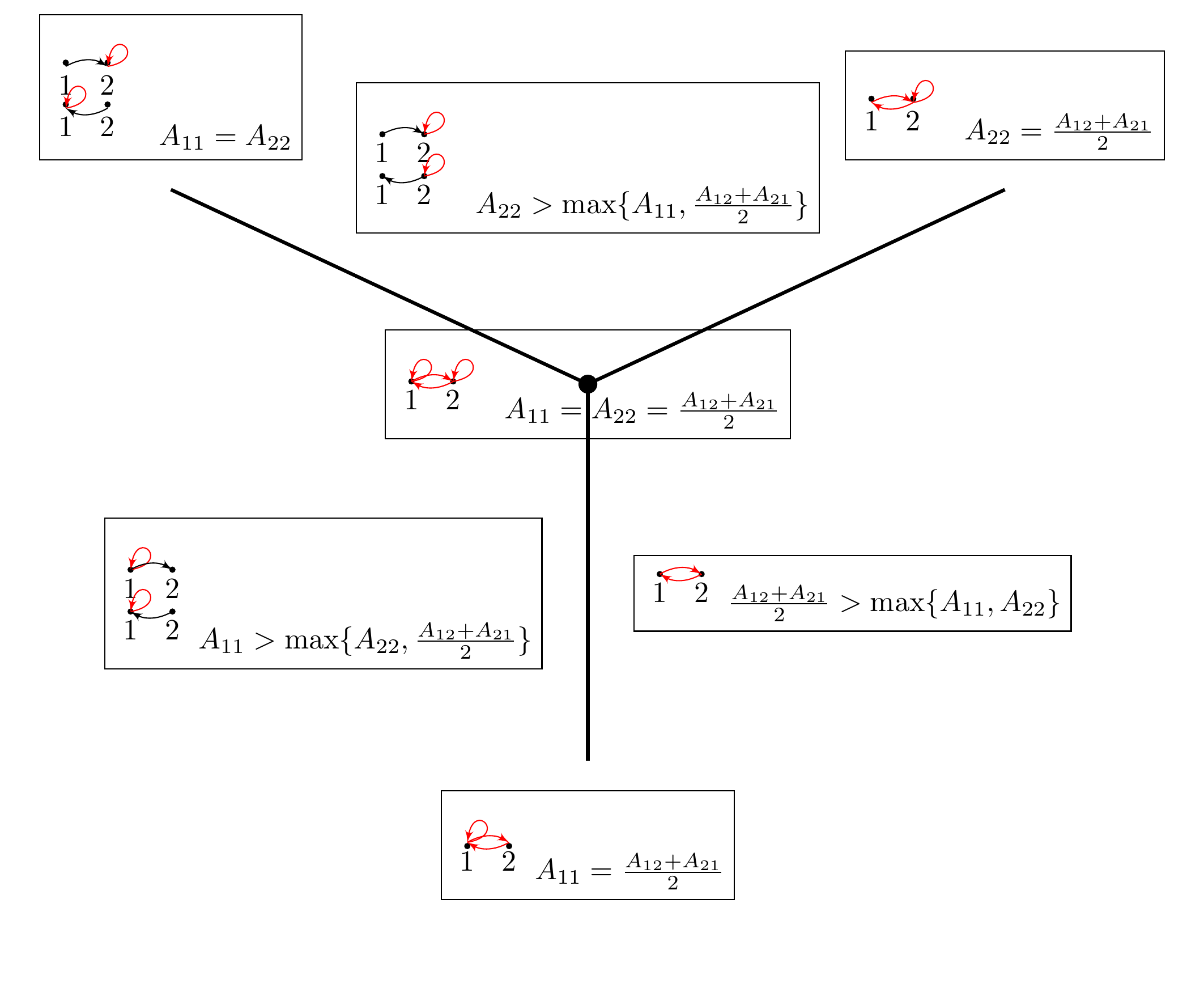}
	 \vskip -0.4cm
	\caption{Cones in the partition $\mathcal{N}_2 = \mathcal{P}_2 = \mathcal{F}_2$ of $\R^{2 \times 2}$, their defining equations and inequalities, and their associated maximal paths. Figure is drawn modulo lineality space, and thus represents the fan $\partial \mathcal{J}_2 = \partial \mathcal{K}_2$ of $\R^2$. Edges in the critical cycles (cycles with maximal mean) are shown in red.}
	\label{fig:n2}
	\end{center}
\end{figure}
\end{ex}
We shall revisit this example in Section \ref{sec:ex} when discussing complete sets of connected relations. For the rest of this paper we study how the sets of graphs indexing cones in Figure \ref{fig:n2} can be generalized to indexing sets for cones of $\mathcal{P}_n$ and $\mathcal{F}_n$ for $n \geq 3$. 

\section{complete and compatible sets of connected relations}\label{sec:prelim}

For fixed $i^\ast$, the open cones of $\mathcal{K}_{n, i^\ast}$ are in bijection with in-directed spanning trees on $n$ nodes with root $i^\ast$, whose edges form the optimal paths from $j$ to $i^\ast$ for all nodes $j \in \{1,\ldots,n\}, j \neq i^\ast$ \cite{networkBook}. One can show that the open cones of $\mathcal{K}_n$ are in bijection with collections of $n$ in-directed spanning trees on $n$ nodes, one rooted at each node, which satisfy following \emph{path compatibility} condition.
\begin{lem}[\cite{networkBook}]\label{lem:path.comp}
If the path $i_1 \to i_2 \to \ldots \to i_k$ is a longest path from node $i_1$ to node $i_k$, then for every $q = 1, \ldots, {k-1}$, the subpath $i_q \to i_{q+1} \to \ldots \to i_k$ is a longest path from node $i_q$ to node $i_k$. 
\end{lem}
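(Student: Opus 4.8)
The plan is to give a short, self-contained proof by the classical cut-and-paste (principle of optimality) argument. I would fix a matrix $A \in \mathcal{J}_n$ throughout, so that longest paths are well defined: since $A$ has no positive cycles, every closed walk in the graph of $A$ has weight $\le 0$, hence any walk from one node to another can be turned into a path (no repeated nodes) of weight \emph{at least} that of the walk by excising closed subwalks one at a time. Because there are only finitely many paths between two given nodes, the longest-path weight is attained, and it equals the supremum of $A(\cdot)$ over all walks between those nodes. All of this is implicit in the setup, since the Kleene star map, and hence $\mathcal{K}_n$, is only considered over $\mathcal{J}_n$.

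Next I would argue by contradiction. Suppose that for some $q \in \{1, \dots, k-1\}$ the subpath $Q := i_q \to i_{q+1} \to \cdots \to i_k$ is not a longest path from $i_q$ to $i_k$, and pick a path $Q'$ from $i_q$ to $i_k$ with $A(Q') > A(Q)$. Let $R := i_1 \to i_2 \to \cdots \to i_q$, which is a path, being a subpath of the given path. Concatenating $R$ with $Q'$ at the shared node $i_q$ yields a walk $W$ from $i_1$ to $i_k$ with
\[ A(W) \;=\; A(R) + A(Q') \;>\; A(R) + A(Q) \;=\; A(i_1 \to i_2 \to \cdots \to i_k). \]
Applying the reduction from the first paragraph to $W$, we obtain a genuine path $\tilde{P}$ from $i_1$ to $i_k$ with $A(\tilde{P}) \ge A(W) > A(i_1 \to i_2 \to \cdots \to i_k)$, contradicting the hypothesis that $i_1 \to i_2 \to \cdots \to i_k$ is a longest path from $i_1$ to $i_k$.

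The one place that needs care, and the step I expect to be the only real (if minor) obstacle, is that the concatenation $W$ need not itself be a path: vertices of $R$ may reappear in $Q'$, so one cannot simply invoke the definition of ``longest path'' for $W$ directly. This is precisely what the no-positive-cycle hypothesis on $A \in \mathcal{J}_n$ buys, via the excision-of-closed-subwalks reduction; and it cannot be dropped, since in the presence of a positive cycle the analogous statement fails for longest \emph{simple} paths. Alternatively one may simply cite \cite{networkBook} for this standard fact, but the argument above is short enough to record in full.
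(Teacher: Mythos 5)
The paper does not prove this lemma at all --- it is quoted from \cite{networkBook} as a standard fact about longest paths --- so there is no internal argument to compare against. Your proof is the standard cut-and-paste argument and is correct: the one genuine subtlety (the concatenation $R + Q'$ may revisit vertices of $R$, so it is only a walk) is exactly the point you flag, and your excision step handles it correctly because every closed walk decomposes into simple cycles, each of weight $\le 0$ for $A \in \mathcal{J}_n$; your observation that the statement fails for longest \emph{simple} paths in the presence of a positive cycle is also accurate and explains why the hypothesis cannot be dropped.
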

Cones of $\mathcal{J}_n$ with codimension one are in bijection with cycles on the complete graph. Their intersections with the open cones of $\mathcal{K}_n$ form the open cones of $\mathcal{F}_n$. Thus, the later are in bijection with \emph{complete connected functions} (see Definition \ref{defn:ccr}). Informally, a complete connected function consists of $n$ trees unioned with a cycle, one rooted at each node, which satisfy the path compatibility condition. 

Many properties of the open cones of $\mathcal{N}_n$ also hold for those of $\mathcal{F}_n$. For example, it can be shown that neighboring full-dimensional cones of $\mathcal{F}_n$ differ by one edge. However, due to the path compatibility condition, it is less obvious what pairs of edges can be swapped to move to a neighboring cone. Similarly, one can compute cone intersection by taking the graph union between pairs of trees with the same root node. However, in doing so, new paths (such as a cycle) may appear, which by the path compatibility condition should also be present in other trees.

Therefore, one needs an efficient indexing set for all closed cones of $\mathcal{F}_n$, and a join operation that corresponds to cone intersections. With the hindsight of Theorem \ref{thm:buddy}, we chose to start off by defining the desired indexing set, the complete sets of connected relations. We first introduce connected relations, the building block for our indexing set. In relation to \cite{ST11}, they are generalizations of connected functions. 

\begin{defn}
A \emph{circled tree} on $[n]$ is the graph union of a cycle and an in-directed spanning tree on $n$ nodes, such that it has exactly one cycle.
A \emph{connected relation} on $[n]$ is a non-empty graph which can be written as the graph union of circled trees on $[n]$.
\end{defn}
Let $\mathcal{R}[n]$ denote the set of all connected relations on $[n]$ adjoined with the empty set. It is a lattice ordered by subgraph inclusion, generated by its join-irreducibles, which are circled trees. Connected functions, for example, are strongly connected circled trees. Note that every connected relation has a unique strongly connected sink component. For $G \in \mathcal{R}[n]$, denote its unique sink component by $s(G)$ and the induced subgraph $S(G)$.

\begin{defn}\label{defn:ccr} Let $\G = \{G_1, \ldots, G_k\}$ be a set of connected relations, $G_i \in \mathcal{R}[n]$. For node $u$, let $\G(u)$ denote the part $G_i \in \G$ containing $u$ in its sink. The list $\G$ is  a \emph{complete set of connected relations} if:
\begin{enumerate}[(a)]
	\item 	The sinks $s(G_j), j = 1, \ldots, k$ form a partition of $[n]$. 
	\item For each pair $i, j \in \{1, \ldots, k\}$, let $S_{ij}$ be the subgraph of the contraction $\widetilde{G}_j$ of $G_j$ rooted at the node indexed by $s(G_i)$. Then $S_{ij}$ equals the induced subgraph in $\widetilde{G}_i$. 
	\item For every triple of distinct nodes $u, v, w \in \{1, \ldots, n\}$, if there exists paths $P_{wv}, Q_{wu}$ in $\G(v)$ such that $Q_{wu}$ is not a subpath of $P_{wv}$, then $P_{wv}$ cannot appear in $\G(u)$. 
\end{enumerate}
Let $\mathcal{CF}[n]$ denote the collection of completed sets of connected relations. 
\end{defn}
Condition (b) is the analogue of the path compatibility condition in Lemma \ref{lem:path.comp}. Note that it implies that the contractions $\widetilde{G}_i$ have the same node set, and they are indexed by the set of sinks $s(G_j), j = 1, \ldots k$. The need for condition (c) is discussed after the proof of Lemma \ref{lem:psi.not.im}. 

\begin{defn}
Write $\G = (\D, \mathcal{E})$ when we decompose $\G$ into the set $\mathcal{D}$ of connected relations whose sinks each contain at least one edge, and the set $\mathcal{E}$ of connected relations whose sinks each contain exactly one node and no edges. We say that $\D$ is a \emph{compatible set of connected relations} and let $\mathcal{CP}[n]$ denote all such $\D$. If $\D$ consists of exactly one connected function, we say that $\G$ is a \emph{complete connected function}.  
\end{defn}

\begin{ex}\label{ex:G}
Figure \ref{fig:cp} shows an incomplete set of connected relations $\G = (D,E)$ on 3 nodes. Here $s(D) = \{2\}$, $s(E) = \{3\}$. Since $\G$ is missing a part with the strong component $\{1\}$ as its sink, it is not complete.
\begin{figure}[h]
	\begin{center}
	 \includegraphics{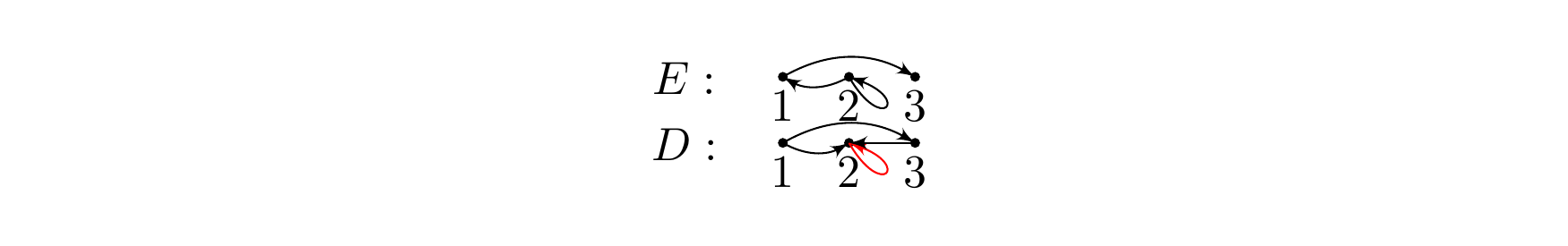}
	 \vskip -0.6cm
\caption{An incomplete set of connected functions $\G = (D,E)$.}
\label{fig:cp} 
	\end{center}
\end{figure}
\end{ex}
\begin{ex}
We can complete $\G$ in Example \ref{ex:G} by adding in a connected relation with sink $\{1\}$ that does not violate condition (b) of Definition \ref{defn:ccr}. Figure \ref{fig:cp.allowed} shows all such compatible connected relations, and Figure \ref{fig:cp.not.allowed} shows some incompatible connected relations. 
In the first case, the subgraph rooted at $3$ consists of the edge $2 \to 3$, which is not a subgraph of $E$. The second violates condition (a) since the sinks do not form a partition of $[n]$. The third violates condition (b) as $D$ and $E$ do not contain the self-loop at 3. 

\begin{figure}[h]
	\begin{center}
	 \includegraphics{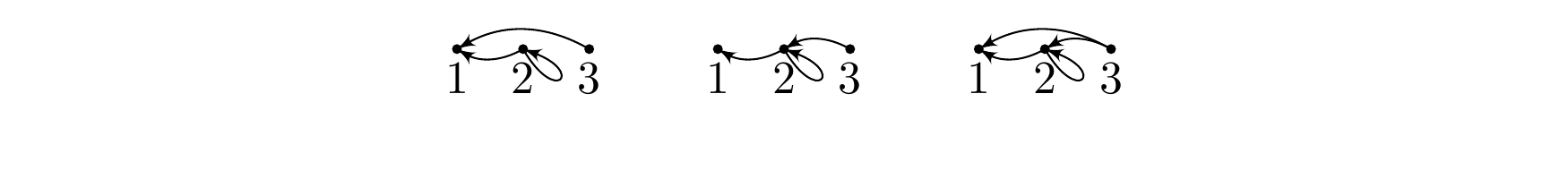}
	 \vskip -1cm
\caption{All three possible connected relations that can complete the set $\G$ of Figure \ref{fig:cp}.}
\label{fig:cp.allowed} 
	\end{center}
\end{figure}

\begin{figure}[h]
	\begin{center}
	 \includegraphics{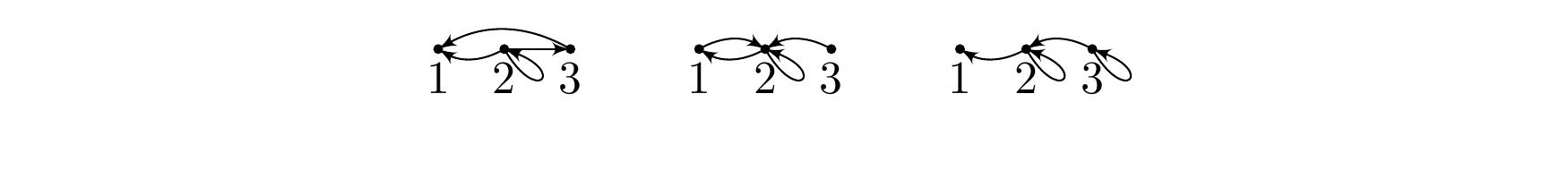}
	 \vskip -1cm
\caption{Some connected relations incompatible to $\G$ of Figure \ref{fig:cp}.}
\label{fig:cp.not.allowed} 
	\end{center}
\end{figure}
\end{ex}

\section{The lattice isomorphism}\label{sec:proof}
In this section we prove the rest of Theorem \ref{thm:buddy}, namely that the face lattice of $\mathcal{F}_n$ is anti-isomorphic to the lattice $\mathcal{CF}[n]$. The proof consists of three steps. First, we show that $\mathcal{CF}[n]$ is a join semilattice generated by its join irreducibles, which are the complete connected functions. Second, we produce a semilattice homomorphism $\Psi$ from $\mathcal{CF}[n]$ to a collection of cones in $\mathbb{R}^{n \times n}$ ordered by subset inclusion. Third, we show that the open cones of the fan $\mathcal{F}_n$ defined in (\ref{eqn:fn}) are precisely the images of the complete connected functions under $\Psi$. Since $\mathcal{F}_n$ is a fan, its face lattice ordered by subset inclusion is generated by meet-irreducibles, which are the open cones of $\mathcal{F}_n$. Together with the first step, this implies that $\Psi$ is a lattice anti-isomorphism between $\mathcal{CF}[n]$ and cones of $\mathcal{F}_n$. 

\subsection{The semilattice of $\mathcal{CF}[n]$.}
We now show that $\mathcal{CF}[n]$ is a join semilattice generated by its join irreducibles, which are the complete connected functions. For $\G \in \mathcal{CF}[n]$, let $\lambda(\G)$ denote the partition of $[n]$ induced by the sinks of its parts. For a pair $\G, \H \in \mathcal{CF}[n]$, write $\H \geq \G$ if for any pair of nodes $u,v$, the set of paths from $u$ to $v$ in $\G(v)$ is a subgraph of $\H(v)$. It is clear that this defines a partial ordering on $\mathcal{CF}[n]$. 

We now define the join of two elements $\G, \H \in \mathcal{CF}[n]$ to be the output of Algorithm \ref{alg:join.on.CF}. This is an elaborated version of taking graph union and updating the strongly connected sink components at the same time. Broadly speaking, the algorithm iterates over three steps. First, we take the minimal amount of graph union required to satisfy condition (a) of Definition ~\ref{defn:ccr}. In doing so, new cycles may form which may connect two sink components, requiring them to be merged. Thus in step two, we check for all such cycles. In step three, we find all triples violating condition (c) of Definition \ref{defn:ccr}, and resolve them by taking the graph union of the corresponding connected relations. If `actions' happened in steps two and three, we go back to step one. Otherwise, the algorithm terminates and outputs its current set of connected relations. 

Note that after each iteration of line \ref{ln:iterate} the partition $\lambda$ is unique and becomes coarser. Thus the algorithm is well-defined and terminates. 
By construction, the output satisfies Definition \ref{defn:ccr}, and hence it is a complete set of connected relations.

\begin{algorithm}
\caption{The join operation on $\mathcal{CF}[n]$.}\label{alg:join.on.CF}
\begin{algorithmic}[1]
\State Input: a pair $\G, \H \in \mathcal{CF}[n]$. 
\State Output: a set of connected relations called $\G \vee \H$.
\Procedure{Join}{$\G,\H$}
\State Set $B = \emptyset$. 
\State Compute $\lambda$, the finest partition of $[n]$ that $\lambda(\G), \lambda(\H)$ both refine. \label{ln:step.a}
\State \textbf{Step 1:} \For{each component $\lambda_i$ of $\lambda$} \label{ln:compute}
	\State  Compute 	\begin{equation}\label{eqn:join}
		I_i = \bigcup_{u \in \lambda_i}\G(u) \cup\H(u) \cup B
		\end{equation}
\EndFor \label{ln:endfor}
\State \textbf{Step 2:} Set $B'$ to be the union over all $i$ of all cycles $C \subset I_i$ where $C \not\subset S(I_j)$ for any $j$.
\State \textbf{Step 3:} 
\For{each ordered triple of distinct nodes $u, v, w \in [n]$}
	\For{each pairs of paths $P_{wv}$, $Q_{wu}$ in $\G(v)$ that violates Definition \ref{defn:ccr}(c)}
			\State Add to $B'$ the cycles formed by the union of all paths $P_{uv}$ in $\G(v)$ and all paths $Q_{vu}$ in $\G(u)$.
	\EndFor
\EndFor	
\If{$B' = \emptyset$}  \label{ln:b.prime}
	\State \textbf{return} $\mathcal{I} = (I_1, \ldots, I_{|\lambda|})$ \Comment{The output is the list $\mathcal{I}$}
\Else
	\State Set $B = B \cup B'$ 
	\State Compute $\lambda$, the finest partition of $[n]$ such that each component of $\lambda(I_j)$, $j = 1, \ldots, |\lambda|$ and each cycle $C$ in $B'$ is contained in exactly one component of $\lambda$. \label{ln:iterate}
	\State Go to line \ref{ln:compute}. 
\EndIf
\EndProcedure
\end{algorithmic}
\end{algorithm}

\begin{lem}\label{lem:join} $\G \vee \H$ is the join of $\G$ and $\H$ in $\mathcal{CF}[n]$.
\end{lem}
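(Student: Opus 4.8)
The statement asserts that the output $\G \vee \H$ of Algorithm~\ref{alg:join.on.CF} is the least upper bound of $\G$ and $\H$ with respect to the partial order $\geq$ on $\mathcal{CF}[n]$. We already know from the discussion preceding the lemma that $\G \vee \H$ lies in $\mathcal{CF}[n]$ and that the algorithm terminates, so two things remain: (i) $\G \vee \H \geq \G$ and $\G \vee \H \geq \H$, i.e.\ it is an upper bound; and (ii) if $\K \in \mathcal{CF}[n]$ satisfies $\K \geq \G$ and $\K \geq \H$, then $\K \geq \G \vee \H$, i.e.\ it is the \emph{least} upper bound.

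For part (i), I would argue by induction on the iterations of the main loop (line~\ref{ln:iterate}), tracking the invariant that every edge and every path currently recorded in some $I_i$ is ``forced'' in the sense that it must appear in $\K(v)$ for any common upper bound $\K$ — this simultaneously sets up part (ii). Concretely: the initial graph unions in equation~(\ref{eqn:join}) only add, for each node $u$, the subgraphs $\G(u)$ and $\H(u)$; since $\K \geq \G$ means all paths of $\G(v)$ into $v$ lie in $\K(v)$, and $u$'s sink in the coarsened partition $\lambda$ must lie in a single sink of $\K$, every such edge is forced. The cycles added in Step~2 arise inside some $I_i$ that is already forced, so they too must lie in the corresponding sink component of $\K$, since $\K$'s sink containing those nodes is strongly connected and contains all the relevant paths. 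The cycles added in Step~3 are formed from paths $P_{uv}$ in $\G(v)$ and $Q_{vu}$ in $\G(u)$ that are likewise forced into $\K$; the triple violating Definition~\ref{defn:ccr}(c) witnesses that, in $\K$, the nodes $u,v,w$ cannot be separated the way they are in $\G$, forcing a merge. Thus at every stage the current data is $\leq \K$, which gives (ii) once the algorithm halts; and (i) follows because the $\G(u)$'s (resp.\ $\H(u)$'s) are all present in the final output by construction.

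Carrying this out cleanly, the steps in order would be: (1) fix a common upper bound $\K$ and the corresponding partition refinement, establishing that $\lambda$ (the partition computed on line~\ref{ln:step.a}, and each subsequent coarsening) refines $\lambda(\K)$; (2) show the invariant ``$I_i \subseteq \K(v)$ for the node $v$ whose $\K$-sink contains $\lambda_i$'' is preserved by Step~1, using $\K \geq \G, \K \geq \H$ and condition (b) to transport paths; (3) show Steps~2 and 3 preserve the invariant, using that $\K$'s sinks are strongly connected (Step~2) and that $\K$ satisfies Definition~\ref{defn:ccr}(c) (Step~3); (4) conclude $\G \vee \H \leq \K$ at termination; (5) separately verify $\G \vee \H \geq \G$ and $\G \vee \H \geq \H$ directly from~(\ref{eqn:join}), which only ever adds edges, never deletes. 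Combining (4) and (5) gives the lemma.

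The main obstacle I anticipate is step~(3), specifically the Step~3 case: one must check that whenever a triple $u,v,w$ with paths $P_{wv}, Q_{wu}$ in $\G(v)$ violates condition~(c), any common upper bound $\K$ really is forced to contain the cycles built from $P_{uv} \subseteq \G(v)$ and $Q_{vu} \subseteq \G(u)$ — i.e.\ that $u$ and $v$ must share a $\K$-sink. This requires unwinding how condition~(c) interacts with the bijective correspondence (via $\Psi$) between these combinatorial constraints and the geometry of the longest-path linear programs; the subtlety is precisely the one flagged in the text as ``the need for condition (c)'', so I would expect to lean on the characterization of $\mathcal{CF}[n]$ membership and possibly on Lemma~\ref{lem:path.comp} to show that the configuration of optimal paths witnessed by the violating triple is incompatible with $u$ and $v$ lying in distinct sink components of any element above both $\G$ and $\H$. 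A secondary, more bookkeeping-style difficulty is making the induction on iterations rigorous given that $B$ accumulates across passes — but since $\lambda$ strictly coarsens each time it returns to line~\ref{ln:compute} and $[n]$ is finite, this is routine.
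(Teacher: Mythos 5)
Your plan follows essentially the same route as the paper's proof: the upper-bound property is read off from the construction, and leastness is proved by induction over the algorithm's iterations, maintaining the invariant that every path in the current candidate is forced into any common upper bound (the paper phrases this as: each intermediate $\mathcal{I}^{(j)}(v)$ is a subgraph of $\mathcal{I}'(v)$, and each new cycle in $B'$, being a union of already-forced paths, must sit inside a single part and hence a single sink component of $\mathcal{I}'$, so the coarsened partition still refines $\lambda(\mathcal{I}')$). The one caution concerns your proposed resolution of the Step-3/condition-(c) case via $\Psi$ and the longest-path linear programs: the paper keeps that step purely combinatorial, and leaning on the geometric correspondence there would be methodologically backwards, since the lattice anti-isomorphism is established only afterwards and its proof (through Proposition \ref{prop:cf.join} and Proposition \ref{prop:vee}) depends on this lemma.
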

\begin{proof} From construction it is clear that $\G \vee \H \in \mathcal{CF}[n]$ and $\mathcal{I} \geq \G, \H$. To see that it is the smallest such element in $\mathcal{CF}[n]$, let $\mathcal{I}^{(1)}, \mathcal{I}^{(2)}, \ldots \mathcal{I}^{(k)} = \mathcal{I}$ be the sequence of intermediate candidate sets produced after the end of line \ref{ln:endfor} for each time this line is reached. 

Let $\mathcal{I}' \in \mathcal{CF}[n]$ be such that $\mathcal{I}' \geq \G, \H$. Then each component of $\lambda(\G), \lambda(\H)$ is necessarily contained in exactly one component of $\lambda(\mathcal{I}')$. Thus any path in $\mathcal{I}^{(1)}(v)$ is a subgraph of $\mathcal{I}'(v)$, and in particular, any cycle $C$ in $B'$ must be a subgraph of some part $\mathcal{I}'_i$. Since $C$ is a cycle, by Definition \ref{defn:ccr}, it is contained in exactly one component of $\lambda(\mathcal{J})$. Thus any path in $\mathcal{I}^{(2)}(v)$ must be a subgraph of $\mathcal{I}'(v)$. Iterating this argument up the sequence $\mathcal{I}^{(j)}$, $j = 1, \ldots, k$, we see that any path in $\mathcal{I}(v)$ must be a subgraph of $\mathcal{I}'(v)$, so $\mathcal{I}' \geq \mathcal{I}$. 
\end{proof}
By the same argument, one can check that the operation $\vee$ is idempotent, commutative and associative. We make use of associativity in the proof of the following proposition.

\begin{prop}\label{prop:cf.join} $\mathcal{CF}[n]$ is a join semilattice generated by complete connected functions, which are precisely its sets of join-irreducibles.
\end{prop}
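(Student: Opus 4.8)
The plan is to prove Proposition \ref{prop:cf.join} in two parts: first, that every complete connected function is join-irreducible in $\mathcal{CF}[n]$, and second, that every element of $\mathcal{CF}[n]$ is the join of the complete connected functions below it, so that these are exactly the join-irreducibles. Throughout I would use the order $\H \geq \G$ and the join operation $\vee$ from Algorithm \ref{alg:join.on.CF}, together with Lemma \ref{lem:join}, which already establishes that $\vee$ is the lattice join.

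For the generation statement, I would start from an arbitrary $\G = \{G_1,\ldots,G_k\} \in \mathcal{CF}[n]$ and produce, for each part $G_i$, a complete connected function $\G^{(i)} \leq \G$ that "carries" all the paths of $G_i$. The natural candidate: replace $G_i$ by its strong component closure, i.e., contract so that there is a single strongly connected sink containing $s(G_i)$, and keep just enough of the other trees — one in-directed spanning tree rooted at that sink — to satisfy conditions (a), (b), (c) of Definition \ref{defn:ccr}. Concretely, since $\G$ satisfies path compatibility, the subgraph of $G_i$ reachable to its sink already gives compatible optimal paths; I would argue this data extends to a complete connected function $\G^{(i)}$ whose only non-singleton-sink part is the single connected function obtained from $G_i$. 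Then I would check $\G = \bigvee_i \G^{(i)}$: the inequality $\bigvee_i \G^{(i)} \leq \G$ is immediate since each $\G^{(i)} \leq \G$ and $\G$ is an upper bound; the reverse follows because every path in $\G(v)$ appears in some $\G^{(i)}$ (namely the one whose connected function sink contains the sink of $\G(v)$), hence in the join. Associativity of $\vee$, noted just before the proposition, is what lets me write this iterated join unambiguously.

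For join-irreducibility, suppose a complete connected function $\G = (\D,\Ecal)$ with $\D = \{F\}$ a single connected function satisfies $\G = \H_1 \vee \H_2$ with $\H_1, \H_2 < \G$. The partition $\lambda(\G)$ has exactly one non-singleton block, namely $s(F)$. Since $\lambda(\H_j)$ refines $\lambda(\G)$ for each $j$, and the join operation in Algorithm \ref{alg:join.on.CF} can only coarsen partitions, I would argue that one of $\lambda(\H_1), \lambda(\H_2)$ must already equal $\lambda(\G)$ — otherwise the strongly connected sink $s(F)$ is split in both, and no cycle produced in Steps 2–3 of the algorithm could arise unless its constituent edges already lie in one of the $\H_j$, forcing that $\H_j$ to contain $s(F)$ as a strong component. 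Say $\lambda(\H_1) = \lambda(\G)$; then $\H_1$ has a connected-function part $F'$ with sink $s(F)$, and since $F$ is itself join-irreducible as a circled tree in $\mathcal{R}[n]$ (it has exactly one cycle, being a connected function) and $F' \subseteq F$ with the same sink, a short argument via conditions (b)–(c) forces $F' = F$ and indeed $\H_1 = \G$, a contradiction. Hence $\G$ is join-irreducible. Conversely, any $\G$ with $\D$ containing two or more connected functions, or one connected relation that is not a connected function, is a proper join of strictly smaller elements by the construction in the previous paragraph, so it is join-reducible; this shows the join-irreducibles are exactly the complete connected functions.

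The main obstacle I expect is the bookkeeping in the generation step — specifically, verifying that the "minimal" completion $\G^{(i)}$ I extract from $G_i$ genuinely lies in $\mathcal{CF}[n]$, i.e., that conditions (b) and (c) of Definition \ref{defn:ccr} survive the truncation to a single in-directed spanning tree per node plus one cycle. Path compatibility (Lemma \ref{lem:path.comp}) is the key tool here: it guarantees that restricting to optimal subpaths stays consistent, but one must be careful that the cycle of $F$ does not induce, via condition (c), extra required paths in the singleton-sink parts that were discarded. I would handle this by choosing $\G^{(i)}$'s singleton parts to consist of the optimal in-trees already present in $\G$ restricted appropriately, so that any triple $(u,v,w)$ violating (c) in $\G^{(i)}$ would already violate it in $\G$ — contradicting $\G \in \mathcal{CF}[n]$. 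The join-irreducibility direction is comparatively routine once one tracks how Algorithm \ref{alg:join.on.CF} affects $\lambda$.
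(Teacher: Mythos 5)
Your decomposition in the generation step has a genuine gap. You propose to associate to each part $G_i$ of $\G$ a \emph{single} complete connected function $\G^{(i)}\leq\G$ that ``carries all the paths of $G_i$,'' and then to write $\G=\bigvee_{i=1}^k\G^{(i)}$. This cannot work: a connected relation is by definition a union of circled trees, so a part $G_i$ may contain two distinct paths $P_{uv}\neq P'_{uv}$ from the same node $u$ into its sink (and its sink may contain several cycles), whereas a connected function is a single strongly connected circled tree, in which every node has a unique outgoing edge and hence a unique path to any target. Consequently the one part of $\G^{(i)}$ with a non-singleton sink can contain at most one of $P_{uv},P'_{uv}$, and your claim that ``every path in $\G(v)$ appears in some $\G^{(i)}$, namely the one whose connected function sink contains the sink of $\G(v)$'' fails, since there is only one such index $i$. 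The correct statement, and the one the paper proves, is that $\G=\bigvee_{\H\in L}\H$ where $L$ is the set of \emph{all} complete connected functions $\H\leq\G$; the whole content of the proof is then: given any single path $P_{uv}$ in $\G(v)$, construct \emph{one} complete connected function $\H\leq\G$ with $P_{uv}\subseteq\H(v)$. You correctly flag in your last paragraph that conditions (b) and (c) of Definition \ref{defn:ccr} are the delicate point, but you treat this as bookkeeping around an already-chosen $\G^{(i)}$, when in fact the per-path extraction is the theorem's core. The paper does it by an explicit iterative algorithm (choose an in-directed spanning tree $H_i$ of a copy of $G$ rooted at $i$, forcing $H_1$ to contain $P_{uv}$, then \emph{overwrite} the remaining copies $G_j$ by the subtree of $H_i$ rooted at $j$ so that condition (b) holds by construction), first on the contractions and then within each strong component when $|\lambda(\G)|>1$. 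Nothing in your sketch produces this compatible family of $n$ trees.

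The join-irreducibility half is essentially fine (the paper dismisses it as clear), and your use of Lemma \ref{lem:join} and associativity is appropriate. But as written the generation argument proves only $\bigvee_i\G^{(i)}\leq\G$ with a false converse; to repair it you must (i) index the join by all of $L$ rather than by the parts of $\G$, and (ii) actually carry out the spanning-tree extraction that realizes each individual path inside some member of $L$.
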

\begin{proof} The join operation of Lemma \ref{lem:join} turns $\mathcal{CF}[n]$ into a join semilattice. It is clear that every complete connected function is join-irreducible. Thus it is sufficient to show that they generate $\mathcal{CF}[n]$. 

For $\G \in \mathcal{CF}[n]$, let $L$ be the collection of complete connected function $\H$ such that $\H \leq \G$, and define $\mathcal{I} := \bigvee_{\H \in L}\H$. By definition $\mathcal{I} \leq \G$. To show that $\G \leq \mathcal{I}$, for any pair of nodes $u,v \in [n]$, fix a path $P_{uv}$ in $\G(v)$. We shall construct a complete connected function $\H$ such that $\H(r) \subseteq \G(r)$ for any node $r \in [n]$, and $P_{uv} \subseteq \H(v)$. This implies $\H \in L$, $P_{uv} \subseteq \mathcal{I}(v)$, and hence $\G \leq \mathcal{I}$ by definition of the partial order on $\mathcal{CF}[n]$. We consider two cases.

\textbf{Case 1: $|\lambda(\G)| = 1$}. That is, $\G$ consists of only one part $G$ whose graph is strongly connected. It is sufficient to construct the contraction $\H(v)$, thus we can assume without loss of generality that $G$ contains a self-loop at $v$. Order the nodes so that $v = 1$. Let $\H$ be the output of the following algorithm.
\begin{algorithm}
\begin{algorithmic}[1]
\State Let $G_1, G_2, \ldots, G_n$ be copies of $G$, to be modified in line \ref{ln:prune} below.
\For{$i = 1, \ldots, n$}
	\State Let $H_i$ be an in-directed spanning tree rooted at $i$ of $G_i$. If $i = v = 1$, require that $H_1$ contains $P_{uv}$. 
	\For{$j = i+1, \ldots, n$} 	
		\State Replace the induced subgraph on $G_j$ by the subtree of $H_i$ rooted at $j$. \Comment{Since $ST$ is a spanning tree, the new $G_j$ is still connected.} \label{ln:prune}
	\EndFor
\EndFor
\State Set $\H = (H_1, \ldots, H_n)$.
\end{algorithmic}
\end{algorithm}

By construction the parts of $\H$ are in-directed spanning trees on distinct root nodes, each containing exactly one cycle which is the self-loop at node $v = 1$. Thus Conditions (a) and (c) of Definition \ref{defn:ccr} are satisfied. Condition (b) follows from line \ref{ln:prune}. Thus $\H$ is a complete connected function, and $\H \leq \G$, $P_{uv} \subseteq \H(v)$ by construction.

\textbf{Case 2:} \emph{$|\lambda(\G)| > 1$.} Again, we can assume without loss of generality that there exists a self-loop at $v$. We first apply the above algorithm to the contractions $\tilde{G}_1, \ldots, \tilde{G}_k$, producing $k$ compatible in-directed spanning trees $T_1, \ldots, T_k$ on the strong components of the $G_i$'s. Each edge in these trees fix a source and target node between two strong components. Now we apply the above algorithm to each strong component to split them into compatible spanning trees on singleton sinks. Finally, for each $T_i$, we replace each of its node (which currently is a strong component) by the corresponding spanning tree of that strong component, whose sink is specified by the edges of $T_i$. By construction, the collection of resulting $n$ trees on the $n$ singleton sinks is the desired complete connected function. 
\end{proof}

\subsection{From connected relations to equations and inequalities}
We now define the map which takes an element of ${\mathcal{R}[n]}$ to a cone in $\mathbb{R}^{n \times n}$. We then extend its domain to complete sets of connected relations, resulting in a candidate for the lattice anti-isomorphism of Theorem \ref{thm:buddy}.  

\begin{defn}\label{defn:psi} Let $\psi: G \mapsto \psi(G)$ be the map from ${\mathcal{R}[n]}$ to cones in $\mathbb{R}^{n\times n}$, where $\psi(G)$ is the cone defined by the linear equations and inequalities output by Algorithm ~\ref{alg:psi}.
\end{defn}

\begin{algorithm}
\caption{Definition of $\psi(G)$.}\label{alg:psi}
\begin{algorithmic}[1]
\State Input: $G \in \mathcal{R}[n]$
\State Output: a collection of non-redundant linear equations and inequalities defining a cone $\psi(G)$ in $\R^{n \times n}$
\Procedure{$\psi$}{G}
	\State Pick an in-directed spanning tree $T$ of $G$, let $r$ be its root. Let $T_u$ denote the path from $u$ to the root $r$ in the tree. 
	\State Pick some cycle $C$ in $G$. Let $\lambda(A) = \frac{A(C)}{|C|}$, $\bar{A} = A \odot (-\lambda(A))$.
	\For{each edge $u \to v$ not in $T$}
		\If{edge $u \to v$ is in $G$}
			\State \textbf{print} $\bar{A}_{uv} + \bar{A}(T_v) = \bar{A}(T_u).$ 	($\bar{A}(T_r) = 0$ by convention)
		\Else \,\, \textbf{print} $\bar{A}_{uv} + \bar{A}(T_v) \leq \bar{A}(T_u).$
		\EndIf
	\EndFor
\EndProcedure
\end{algorithmic}
\end{algorithm}

\begin{lem}\label{lem:alg.psi} Algorithm \ref{alg:psi} is well-defined: its output is independent of the choice of $C$ and the spanning tree $T$.
\end{lem}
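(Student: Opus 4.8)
The plan is to show that changing either of the two choices in Algorithm \ref{alg:psi} — the cycle $C$ and the in-directed spanning tree $T$ — leaves the cone $\psi(G)$ unchanged, even though the individual printed equations and inequalities may change. I would handle the two choices separately, starting with the cycle.

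\emph{Independence of the choice of cycle $C$.} First I would observe that if $C$ and $C'$ are two cycles in $G$, then $G$ contains a connected relation, so by the circled-tree decomposition there is a circled tree $H \subseteq G$ containing $C$, and likewise one containing $C'$; more importantly, since $G$ is connected, there is a path joining $C$ to $C'$. The key point is that once we impose the equations $\bar{A}_{uv} + \bar{A}(T_v) = \bar{A}(T_u)$ for every edge $u\to v$ of $G$ not in $T$ — together with the convention $\bar A(T_r)=0$ — we are forcing $\bar A(P) = \bar A(T_u) - \bar A(T_w)$ for the weight of \emph{any} path $P$ from $u$ to $w$ that lies in $G$ (this follows by telescoping along the tree and substituting the edge equations one at a time; formally, induct on the number of non-tree edges in $P$). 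Applying this to the cycle $C$, which returns to its start, gives $\bar A(C) = 0$, i.e. $A(C)/|C| = \lambda(A)$; applying it to $C'$ gives $A(C')/|C'| = \lambda(A)$ as well. Hence on the cone cut out by these equations the quantity $A(C)/|C|$ is the same for every cycle $C$ of $G$, so the normalization $\bar A = A \odot (-\lambda(A))$ is independent of which cycle was chosen, and therefore so are all the printed inequalities. The only subtlety is that this argument is slightly circular as stated — we want to conclude the \emph{cone} is the same, but we used the equations to deduce $\bar A(C)=\bar A(C')$. This is fine: fix $T$; the set of equations printed does not depend on $C$ at all (they range over all non-tree edges of $G$), only the definition of $\lambda(A)$ and hence the constant terms $\bar A_{uv} = A_{uv} - \lambda(A)$ in each relation does; and the argument shows that on the solution set of the equations, $\lambda(A)$ computed from $C$ equals $\lambda(A)$ computed from $C'$, so the two systems (equations + inequalities) have the same solution set.

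\emph{Independence of the choice of spanning tree $T$.} Here I would argue that for any two in-directed spanning trees $T, T'$ of $G$ the printed systems are related by an invertible affine substitution that preserves the cone. Concretely, let $r, r'$ be the roots of $T, T'$. For a node $u$, the identity established above shows that on the locus of the $T$-equations, $\bar A(T_u) - \bar A(T'_u)$ equals $\bar A(T_{r'}) - \bar A(T'_r) $ plus a correction that telescopes to a constant independent of $u$ — in fact $\bar A(T_u) = \bar A(T_{u'}) + \bar A(P_{u u'})$ for any path, so the two "potential functions" $u \mapsto \bar A(T_u)$ and $u\mapsto \bar A(T'_u)$ differ by the additive constant $\bar A(T_{r'})$ (the weight in $\bar A$ of the $T$-path from $r'$ to $r$). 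Thus the equation or inequality printed for a fixed non-tree edge $u\to v$ — which reads $\bar A_{uv} + \bar A(T_v) \lessgtr \bar A(T_u)$ for $T$ versus $\bar A_{uv} + \bar A(T'_v) \lessgtr \bar A(T'_u)$ for $T'$ — becomes literally the same relation after this common shift cancels, provided the relation already holds (i.e. we are on the cone). So I would phrase the conclusion as: the $T$-system implies every relation in the $T'$-system and vice versa, hence they define the same cone. The edges that are non-tree for one tree but tree-edges for the other are handled by noting that a $T$-tree-edge $u\to v$, once we are on the locus of the $T'$-equations, satisfies $\bar A_{uv} + \bar A(T'_v) = \bar A(T'_u)$ because that edge lies in $G$, which is exactly the equality the $T$-system records implicitly through $\bar A(T_u) - \bar A(T_v) = \bar A_{uv}$ being built into the definition of the tree potential.

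\emph{Main obstacle.} The crux is the "telescoping lemma": on the solution set of the edge-equations for a tree $T$, the $\bar A$-weight of any path in $G$ between two nodes depends only on the endpoints, equal to the difference of tree potentials. Once that is in hand, both independence statements follow quickly, and in particular $\bar A(C) = 0$ for every cycle $C$ in $G$, which is what makes $\lambda(A)$ well-defined. I expect setting up the induction for this lemma carefully — tracking that substituting one non-tree edge equation reduces the number of non-tree edges on the path while preserving its $\bar A$-weight — to be the only place requiring real care; everything else is bookkeeping.
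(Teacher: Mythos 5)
Your proposal is correct and follows essentially the same route as the paper's proof: establish that on the solution set of the printed equations the $\bar{A}$-weight of any path from $u$ to the root equals the tree potential $\bar{A}(T_u)$ (the paper proves this by peeling off the first non-tree edge and inducting on path length, which is your telescoping lemma), then deduce $\bar{A}(C')=0$ for every cycle to get independence of $C$, and use the constant shift $\bar{A}(T_u)=\bar{A}(T'_u)+\bar{A}(T_{r'})$ to get independence of $T$. Your treatment is in fact slightly more careful than the paper's on the two fine points you flag (the apparent circularity in the cycle argument and the edges that are tree edges for one tree but not the other), but these are elaborations of the same argument, not a different one.
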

\begin{proof} 
For any cycle $C' \in G$, the algorithm outputs $\bar{A}(C') = 0$, and hence $A(C)/|C| = A(C')/|C'|$ for all pairs of cycles $C, C' \in G$. Thus the algorithm output is independent of the choice of the cycle $C$. 

To show that the algorithm output is independent of $T$, it is sufficient to prove that for any path $P_{ur}$ from $u$ to $r$, we have $\bar{A}(P_{ur}) = \bar{A}(T_u)$. Indeed, suppose that this is true. Let $T'$ be another in-directed spanning tree of $G$ with root $r'$. Then $r, r'$ are strongly connected, hence $\bar{A}(T_u) = \bar{A}(T'_u) + \bar{A}(T_{r'})$. Thus the linear inequalities and equalities outputted are the same, since the term $\bar{A}(T_{r'})$ cancels on both sides. 

We now prove the above claim. Fix a path $P_{ur}$, and without loss of generality, suppose that $P_{ur} \neq T_u$, and that the first edge of the two paths differ, that is, the edge $u \to v$ on the path $P_{ur}$ is not in $T$. Thus 
$$\bar{A}(T_u) = \bar{A}_{uv} + \bar{A}(T_v) = \bar{A}(P_{ur}(u \to v)) + \bar{A}(T_v).$$
Therefore it is sufficient to prove that $\bar{A}(T_v) = \bar{A}(P_{ur}(v \to r))$ for the induced path from $v$ to $r$ in $P_{ur}$. Repeating this argument, noting that at each step the length of the remaining path is decreasing, we see that $\bar{A}(P_{ur}) = \bar{A}(T_u)$. 
\end{proof}

\begin{lem} Algorithm \ref{alg:psi} outputs the minimal set of equations and inequalities defining the cone $\psi(G)$. This is a closed, non-trivial cone, and its codimension is between $0$ and $n(n-1)$. The cone $\psi(G)$ has codimension 0 if and only if $G$ is a connected function, in which case it is the cone $\Omega_G$ of $\mathcal{P}_n$ as defined in \cite{ST11}. 
\end{lem}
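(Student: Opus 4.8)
The statement has four parts: (i) the output of Algorithm~\ref{alg:psi} is a non-redundant (minimal) defining system; (ii) $\psi(G)$ is closed and non-trivial; (iii) $0 \le \mathrm{codim}\,\psi(G) \le n(n-1)$; (iv) $\mathrm{codim}\,\psi(G) = 0$ iff $G$ is a connected function, and then $\psi(G) = \Omega_G$. The plan is to reduce everything to the combinatorics of the tree $T$ and the extra edges of $G$, using Lemma~\ref{lem:alg.psi} so that the choices of $T$ and $C$ are irrelevant.

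First I would set up coordinates adapted to the tree. Fix the spanning tree $T$ with root $r$ as in the algorithm, and write each matrix $A$ in the shifted coordinates $\bar A = A \odot(-\lambda(A))$. The $n-1$ tree edges, together with the hyperplanes $\bar A(C')=0$ for cycles, cut out the lineality space; more usefully, the quantities $\{\bar A_{uv} + \bar A(T_v) - \bar A(T_u) : u\to v \notin T\}$ form (a subset of) a coordinate system on the $n(n-1)$-dimensional quotient $\R^{n\times n}/(\spn\{\1\1^T\}+V_n)$, because there are exactly $n^2-(n-1)-1 = n(n-1)$ off-tree edges once we pass to the quotient by the lineality space, and each such "deviation coordinate" is an independent linear functional. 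Here I would lean on the same independence argument already implicit in Proposition~\ref{prop:Nn}/Corollary~\ref{cor:kn}: these are exactly the slack variables of the longest-path LPs. Granting that, part (i) is immediate: each off-tree edge contributes exactly one constraint (an equality if the edge lies in $G$, an inequality otherwise), these constraints involve pairwise-distinct coordinates in the quotient, so none is redundant; and the number of equalities is $e(G) - (n-1)$ where $e(G)$ is the edge count of $G$ (every edge of $G$ not in $T$ gives an equality, and $T$ has $n-1$ edges, all in $G$). Hence $\mathrm{codim}\,\psi(G) = e(G) - (n-1)$, already giving part (iii): $G$ is a connected relation so it contains a circled tree, whence $e(G)\ge n$ and $\mathrm{codim}\ge 1$...

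— wait, here I must be careful, because the claim in (iv) is that codimension $0$ happens iff $G$ is a connected function, yet a connected function is a strongly connected circled tree, which has $n$ edges, not $n-1$. The resolution is that self-loops are allowed, and the cycle $C$ may be a self-loop, which is a single edge that is "both" a tree-completion edge and carries the eigenvalue normalization; more precisely the right count is in the quotient $\mathbb{TP}^{n-1}$ picture, where one edge of the cycle is absorbed into the definition of $\lambda(A)$, so the effective number of equality constraints is $e(G)-n$, not $e(G)-(n-1)$, giving $\mathrm{codim}\,\psi(G) = e(G)-n$. Then $\mathrm{codim}=0 \iff e(G)=n \iff G$ is a circled tree with no extra edges; combined with the requirement (for codim $0$, i.e. an open cone of $\mathcal F_n$) that the unique cycle be a \emph{critical} cycle forcing strong connectivity of the sink, one gets exactly "connected function." I would phrase this carefully and cross-check it against Example~\ref{ex:n=2} (Figure~\ref{fig:n2}), where the three $0$-codimension cones indeed correspond to the three connected functions on $2$ nodes.

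For part (ii), closedness is clear since the defining system is a finite list of weak (non-strict) equalities and inequalities, and linearity of each functional in the entries of $A$ follows because $\bar A_{uv}$ and the path sums $\bar A(T_u)$ are linear in $A$ once $\lambda(A)=A(C)/|C|$ is fixed by a cycle of $G$. Non-triviality — that $\psi(G)$ is not contained in a hyperplane beyond its declared equalities, equivalently that it is full-dimensional inside the affine subspace it spans — follows from the coordinate description: picking the deviation coordinates of the inequality-edges strictly negative and those of the equality-edges zero yields an interior point, and one must check such a matrix actually lies in $\mathcal J_n$ (no positive cycles), which holds because $\bar A(C')=0$ for cycles $C'$ of $G$ and strictly negative deviations keep all other cycles of weight $\le 0$; this is the "no positive cycle" verification and is the one genuinely computational point. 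Finally, for the identification $\psi(G)=\Omega_G$ when $G$ is a connected function, I would simply match the defining inequalities above against the description of $\Omega_G$ in \cite{ST11}: both assert that the tree edges of $G$ realize the longest paths to the root and the self-loop/cycle is critical, i.e. they are the cones of linearity of the eigenvector map, so the two cones coincide.

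The main obstacle I anticipate is getting the edge-count bookkeeping exactly right — in particular the "$-n$ versus $-(n-1)$" issue caused by self-loops and by the eigenvalue normalization absorbing one edge of the cycle — and proving cleanly that the deviation functionals for off-tree edges are linearly independent modulo the lineality space. Everything else (closedness, linearity, the $\Omega_G$ identification) is routine once that linear-algebra/combinatorics core is pinned down.
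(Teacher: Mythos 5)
Your proposal is correct and follows essentially the same route as the paper: minimality and the codimension bounds come from the observation that each off-tree edge contributes the unique constraint containing the entry $A_{uv}$, so the $n(n-1)$ constraints are linearly independent, and codimension $0$ occurs exactly when the only edge of $G$ outside $T$ is the one closing the cycle $C$, i.e.\ when $G$ is a circled tree (connected function), whose inequalities are precisely those of $\Omega_G$ in \cite{ST11}. The only divergences are cosmetic: the paper obtains closedness and non-triviality by noting that the $n$-dimensional lineality space of $\mathcal{N}_n$ is contained in $\psi(G)$ rather than by exhibiting an interior point in $\mathcal{J}_n$, and your corrected count $\operatorname{codim}\psi(G)=e(G)-n$ is exactly the computation the paper records later as Proposition~\ref{prop:codim} in the case $k=1$.
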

\begin{proof} 

The output of Algorithm \ref{alg:psi} is minimal since the $n(n-1)$ equations and inequalities in the output are linearly independent. Indeed, each edge $u \to v$ in $G$ gives rise to the unique equation or inequality in the output that contains the entry $A_{uv}$. 

The n-dimensional lineality space of $\mathcal{N}_n$ is contained in $\psi(G)$ for any $G \in \mathcal{R}[n]$. Thus $\psi(G)$ is a closed, non-trivial cone, with codimension between $0$ and $n(n-1)$. Algorithm ~\ref{alg:psi} outputs no equality if and only if $G$ is a circled tree with tree $T$ and cycle $C$. In this case, the output of Algorithm \ref{alg:psi} is precisely the $n(n-1)$ facet-defining inequalities of the cone $\Omega_G$ found in \cite{ST11}.
\end{proof}

Unfortunately $\psi$ does not completely carry over the lattice structure of $\mathcal{R}[n]$ to its image set of cones. 
\begin{lem}\label{lem:psi.not.im} Suppose $G, H \in \mathcal{R}[n]$. Then $\psi(G \cup H) \subseteq \psi(G) \cap \psi(H)$. Furthermore, $\psi(G \cup H) = \psi(G) \cap \psi(H)$ if and only if either
\begin{enumerate}[(a)]
	\item The sink components are \emph{not} disjoint, or 
	\item There exists a triple of distinct nodes $u, v, w$, $u \in s(G), v \in s(H)$ and paths $P_{wv}, Q_{wu}$ in both $G$ and $H$ such that $Q_{wu} \neq P_{wv}(w \to u)$.
\end{enumerate}	
\end{lem}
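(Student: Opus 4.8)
The plan is to replace Algorithm~\ref{alg:psi} by the \emph{potential} description of $\psi(G)$ that its proof (Lemma~\ref{lem:alg.psi}) implicitly supplies. Fix $G\in{\mathcal{R}[n]}$, a cycle $C\subseteq G$, and write $\bar A=A\odot(-\lambda(A))$ with $\lambda(A)=A(C)/|C|$. The argument in Lemma~\ref{lem:alg.psi} shows $A\in\psi(G)$ iff there is $\pi^G\in\R^n$ --- namely $\pi^G_u=\bar A(T_u)$ for an in-directed spanning tree $T$ of $G$, well defined up to an additive constant since $G$ is connected with strongly connected sink $s(G)$ --- such that $\bar A_{uv}=\pi^G_u-\pi^G_v$ for every edge $u\to v$ of $G$ and $\bar A_{uv}\le\pi^G_u-\pi^G_v$ for every edge $u\to v$ not in $G$. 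With this description the first inclusion is immediate: a potential $\pi$ witnessing $A\in\psi(G\cup H)$ also witnesses $A\in\psi(G)$ (every edge of $G$ is an edge of $G\cup H$; for an edge of $H\setminus G$ the equality imposed by $\pi$ gives the inequality required by $\psi(G)$) and, symmetrically, $A\in\psi(H)$. One also checks that on $\psi(G)\cap\psi(H)$ the cycle means used for $G$, for $H$, and for $G\cup H$ all coincide with the maximum cycle mean of $A$, so the three cones may be described relative to one and the same $\bar A$.

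For the equality statement, fix $A\in\psi(G)\cap\psi(H)$ with potentials $\pi^G,\pi^H$ and set $\delta:=\pi^G-\pi^H$. Then $A\in\psi(G\cup H)$ if and only if $\delta$ is constant on $[n]$: a potential for $\psi(G\cup H)$ restricts to a potential for each of $\psi(G),\psi(H)$ and hence equals $\pi^G$ and $\pi^H$ up to constants; conversely, if $\delta$ is constant one may take $\pi^G=\pi^H$, and this common vector is a potential for $G\cup H$. The key point is that $\delta$ is monotone: along an edge $u\to v$ of $G$ one has $\bar A_{uv}=\pi^G_u-\pi^G_v$ while $\bar A_{uv}\le\pi^H_u-\pi^H_v$, so $\delta_u\le\delta_v$; symmetrically $\delta$ is non-increasing along edges of $H$. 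Hence $\delta$ is constant along edges lying in both $G$ and $H$, is non-decreasing along every edge of the digraph $G\cup H^{\mathrm{rev}}$, and is therefore constant on its strong components. Since every node reaches $s(G)$ inside $G$, $\delta$ attains its maximum on $s(G)$, where it is constant; since every node reaches $s(H)$ inside $H$, so $s(H)$ reaches every node inside $H^{\mathrm{rev}}$, $\delta$ attains its minimum on $s(H)$, where it is constant. Thus $\delta$ is constant on $[n]$ iff its values on $s(G)$ and on $s(H)$ agree, and $\psi(G\cup H)=\psi(G)\cap\psi(H)$ iff this holds for \emph{every} $A\in\psi(G)\cap\psi(H)$.

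Granting this, the ``if'' direction is short. Under (a), a node $w\in s(G)\cap s(H)$ has $\delta_w$ equal both to the common value of $\delta$ on $s(G)$ (its maximum) and to its common value on $s(H)$ (its minimum), so $\delta$ is constant. Under (b), the two paths posited there --- one joining the fork node $w$ to a node $u\in s(G)$, one joining $w$ to a node $v\in s(H)$ --- telescope against the two potential systems: using that the edges of a path lying in $G$ (resp.\ $H$) are slack for $\pi^H$ (resp.\ $\pi^G$), one gets a chain $\delta_u\le\delta_w\le\delta_v$, which together with $\delta_u=\max\delta\ge\min\delta=\delta_v$ forces $\delta$ constant; the hypothesis $Q_{wu}\ne P_{wv}(w\to u)$ is automatic once (a) fails (otherwise $u\in s(G)$ would lie on a $G$-path ending in $s(H)$, contradicting $s(G)\cap s(H)=\emptyset$) and is there only to make (b) self-contained. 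For the converse I argue the contrapositive: assuming (a) and (b) both fail I construct $A\in\psi(G)\cap\psi(H)\setminus\psi(G\cup H)$. It suffices to produce a non-constant \emph{feasible} $\delta:[n]\to\R$, i.e.\ one non-decreasing along edges of $G$ and non-increasing along edges of $H$ (so $\delta_a=\delta_b$ whenever $a\to b$ lies in both). For then, choosing any $\pi^G$, putting $\pi^H:=\pi^G-\delta$, and defining $\bar A$ by $\bar A_{uv}=\pi^G_u-\pi^G_v$ on $E(G)$, $\bar A_{uv}=\pi^H_u-\pi^H_v$ on $E(H)\setminus E(G)$ (consistent on $E(G)\cap E(H)$ by feasibility), and $\bar A_{uv}$ sufficiently negative off $E(G)\cup E(H)$, the matrix $A=\bar A$ lies in $\psi(G)\cap\psi(H)$ with potentials $\pi^G,\pi^H$, so membership in $\psi(G\cup H)$ would force $\delta$ constant, a contradiction. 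A natural candidate is $\delta=\mathbbm{1}_T$, where $T$ is the smallest subset of $[n]$ containing $s(G)$ that is closed under out-edges of $G$ and in-edges of $H$; such a $\delta$ is feasible, and is non-constant precisely when $T\ne[n]$, equivalently $s(H)\not\subseteq T$.

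The main obstacle is exactly this last point: showing that when (a) and (b) both fail the closure $T$ omits $s(H)$. Failure of (a) gives $s(G)\cap s(H)=\emptyset$, so no node of $s(H)$ is forced into $T$ at the start; one must then rule out that the closure --- extending forward along $G$ and backward along $H$ --- ever drags a node of $s(H)$ in. Tracing such a capture back to $s(G)$ produces, at the last backward-$H$ step before $s(H)$ is reached, a fork node $w$ together with the two paths (one into $s(G)$, one into $s(H)$) demanded by (b); so the negation of (b) precludes it. Making this implication precise, keeping careful track of the asymmetry between ``forward in $G$'' and ``backward in $H$'' steps (and of self-loops, which impose no monotonicity on $\delta$), is where the real work lies; the rest is bookkeeping with the potential description.
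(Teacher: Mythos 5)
Your potential-function reformulation of $\psi(G)$ is correct, and for the parts it covers it is cleaner than the paper's argument (which manipulates the inequalities directly and disposes of case (b) and of the negative case by an only-sketched ``induction on the number of nodes'' down to $n=3$ and $n=2$). The first inclusion, the equivalence ``$A\in\psi(G\cup H)$ iff $\delta=\pi^G-\pi^H$ is constant,'' the monotonicity of $\delta$ along $G$- and $H$-edges, the ``if'' direction for both (a) and (b), and the reduction of the converse to the purely combinatorial statement ``if the closure $T$ of $s(G)$ under forward $G$-edges and backward $H$-edges omits $s(H)$, then $\delta=\mathbbm{1}_T$ is feasible and non-constant and yields a separating matrix'' are all sound.

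The step you defer, however --- that $s(H)\subseteq T$ together with the failure of (a) forces (b) --- is not merely unproven; it is false under the literal reading of (b) (both paths lying in both $G$ and $H$), so the gap cannot be closed the way you propose. Take $n=3$, $G=\{1\to1,\ 2\to1,\ 3\to2\}$ and $H=\{2\to2,\ 1\to2,\ 3\to1\}$. Then $s(G)=\{1\}$ and $s(H)=\{2\}$ are disjoint, and no path from $3$ to $2$ (nor from $3$ to $1$) lies in both graphs: the only such path in $G$ uses $3\to2\notin H$, the only one in $H$ uses $1\to2\notin G$. So (a) and (b) both fail. Yet your closure gives $1\in T$, then $3\in T$ via $3\to1\in H$, then $2\in T$ via $3\to2\in G$, so $T=[3]$; and indeed $\psi(G)\cap\psi(H)=\psi(G\cup H)$ here, because the inequalities $\bar A_{31}\le\bar A_{32}+\bar A_{21}$ (from $\psi(G)$) and $\bar A_{32}\le\bar A_{31}+\bar A_{12}$ (from $\psi(H)$) add to give $\bar A_{12}+\bar A_{21}\ge0$, which with the reverse inequality forces the cycle $1\to2\to1$ to be critical, after which all remaining constraints collapse to those of $\psi(G\cup H)$. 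What your machinery actually characterizes is ``equality holds iff the forward-$G$/backward-$H$ closure of $s(G)$ meets $s(H)$'' --- a condition implied by (a) or (b) but strictly weaker than their disjunction; the same example also defeats the paper's handling of its third case. So to finish along your lines you would have to either prove your closure condition equivalent to ``(a) or (b)'' (impossible, by the example) or argue that the lemma's condition (b) should be restated as your closure condition; as written, the ``only if'' direction of your proof is a genuine gap.
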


\begin{proof} 
The first statement follows from the definition of $\psi$. We prove the second statement by considering three disjoint cases: when condition (a) holds, when (a) does not hold but (b) holds, and when neither (a) nor (b) holds. 

Suppose that (a) holds, that is, $s(G) \cap s(H) \neq \emptyset$. Any inequality in $\psi(G \cup H)$ is also an inequality in both $\psi(G)$ and $\psi(H)$. Thus for $A \in \psi(G) \cap \psi(H)$, we need to show that $A$ satisfies the extra equations in $\psi(G \cup H)$. Let $r \in s(G) \cap s(H)$. For $u \in [n]$, consider the two paths from $u$ to $r$: $G_{ur}$ and $H_{ur}$, coming from $G$ and $H$ respectively. Since $A \in \psi(G)$, $\bar{A}(G_{ur}) \geq \bar{A}(H_{ur})$. Since $A \in \psi(H)$, $\bar{A}(G_{ur}) \leq \bar{A}(H_{ur})$. Thus $\bar{A}(G_{ur}) = \bar{A}(H_{ur})$ for all $u \in [n]$, so $A \in \psi(G \cup H)$. 

Suppose $s(G) \cap s(H) = \emptyset$ and condition (b) holds. By induction on the number of nodes, we reduce to the case $n = 3$ with $s(G) = \{1\}, s(H) = \{2\}$, and and the triple $1,2,3$ witnesses condition (b). That is, $G$ contains the edges $2 \to 1, 3 \to 1, 3 \to 2$, and $H$ contains the edges $1 \to 2, 3 \to 1, 3 \to 2$. Let $A \in \psi(G) \cap \psi(H)$. By definition of $\psi$, $A_{11} = A_{22} = \lambda(A)$. Since $A \in \psi(G)$, $\bar{A}_{31} = \bar{A}_{32} + \bar{A}_{21}$. But $A \in \psi(H)$, hence $\bar{A}_{32} = \bar{A}_{31} + \bar{A}_{21}$. Thus $\bar{A}_{21} + \bar{A}_{12} = 0$, that is, the cycle $C = 1 \to 2 \to 1$ is also a critical cycle of $A$. Thus 
$$\psi(G) \cap \psi(H) = \psi(G \cup C) \cap \psi(H \cup C) = \psi((G\cup C) \cup (H \cup C)) = \psi(G \cup H)$$
where the last equality follows from the first case.

Suppose neither conditions (a) nor (b) hold. By induction on the number of nodes, we can reduce to the case $n = 2$, where $s(G) = \{1\}, s(H) = \{2\}$. By enumerating all possibilities, we can check that regardless of the critical cycle, the matrix
$$A = \left[\begin{array}{cc} 0 & - 1 \\ -1 & 0 \end{array} \right] $$
lies in $\psi(G) \cap \psi(H)$, but not $\psi(G \cup H)$, for example.
\end{proof}


We now show that one can extend $\psi$ to the map $\Psi$ from $\mathcal{CF}[n]$ to cones in $\mathbb{R}^{n \times n}$ satisfying $\Psi(\mathcal{G} \vee \mathcal{H}) = \Psi(\mathcal{G}) \cap \Psi(\mathcal{H})$. This will be our lattice anti-isomorphism between $\mathcal{CF}[n]$ and the fan $\mathcal{F}_n$. 

\begin{defn}\label{defn:cf.psi} Let $\G = (G_1, \ldots, G_k)$ where each $G_i \in {\mathcal{R}[n]}$. Define a cone in $\R^{n \times n}$ by
$$ \Psi(\G) = \bigcap_{i=1}^k\psi(G_i).$$
\end{defn}

\begin{prop} \label{prop:vee}
For $\G , \H \in \mathcal{CF}[n]$, 
$$ \Psi(\G \vee \H) = \Psi(\G) \cap \Psi(\H).$$
\end{prop}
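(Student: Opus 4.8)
The plan is to prove the two inclusions $\Psi(\G \vee \H) \subseteq \Psi(\G) \cap \Psi(\H)$ and $\Psi(\G) \cap \Psi(\H) \subseteq \Psi(\G \vee \H)$ separately. The first inclusion is the easy half: since $\G \vee \H \geq \G$ and $\G \vee \H \geq \H$ in the partial order on $\mathcal{CF}[n]$, every path present in $\G(v)$ or $\H(v)$ is present in $(\G \vee \H)(v)$, so every equation or inequality output by Algorithm \ref{alg:psi} on a part of $\G$ or $\H$ is among those defining $\Psi(\G \vee \H)$; hence $\Psi(\G \vee \H) \subseteq \psi(G_i)$ for each part and likewise for $\H$, giving $\Psi(\G\vee\H)\subseteq \Psi(\G)\cap\Psi(\H)$. (One should double-check here that Algorithm \ref{alg:psi} only cares about the set of directed paths between nodes, which follows from Lemma \ref{lem:alg.psi}, so "$\H \geq \G$" really does imply "$\psi$ of each part of $\G$ contains $\Psi(\H)$''.)

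For the reverse inclusion the idea is to track Algorithm \ref{alg:join.on.CF} step by step and show that at every stage the cone cut out by the current candidate list $\mathcal{I}^{(j)}$ still contains $\Psi(\G)\cap\Psi(\H)$; since the algorithm terminates at $\mathcal{I} = \G\vee\H$ this gives the claim. Let $A \in \Psi(\G) \cap \Psi(\H)$ be fixed. The key local fact is Lemma \ref{lem:psi.not.im}: whenever the algorithm merges two parts or adjoins a cycle $C$ to the relevant parts, the triggering configuration is exactly one of the two situations (a) or (b) in that lemma, so $\psi$ of the merged/augmented graph equals the intersection of $\psi$ of the pieces, and hence still contains $A$. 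Concretely, I would argue by induction on the sequence $\mathcal{I}^{(1)}, \mathcal{I}^{(2)}, \ldots, \mathcal{I}^{(k)}=\mathcal{I}$ produced at line \ref{ln:endfor}: at the base step, forming $I_i^{(1)} = \bigcup_{u\in\lambda_i}\G(u)\cup\H(u)$ merges parts whose sinks got identified in $\lambda$, and when two sinks $s(G_a), s(H_b)$ merge it is because a cycle ran through both, so case (a) of Lemma \ref{lem:psi.not.im} applies to the relevant unions and $A$ survives; the cycles placed in $B'$ in Steps 2 and 3 are, by construction, cycles forced by case (a) (a new cycle spanning two old sink components) or case (b) (a triple $u,v,w$ with incompatible paths present in both graphs) — in either case Lemma \ref{lem:psi.not.im} says adjoining such a cycle does not shrink $\psi$ below $A$. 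Feeding $B'$ back in at line \ref{ln:iterate} and repeating, $A$ lies in the cone defined by $\mathcal{I}^{(j)}$ for all $j$, hence in $\Psi(\G\vee\H) = \bigcap_i \psi(I_i)$.

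The main obstacle I anticipate is making the inductive step in Steps 2 and 3 fully rigorous: one must verify that each cycle the algorithm throws into $B'$ genuinely corresponds to one of the two exceptional cases of Lemma \ref{lem:psi.not.im} \emph{relative to the current candidate parts $I_i^{(j)}$} (not just relative to the original $\G, \H$), and that after adjoining it the hypotheses needed to re-apply Lemma \ref{lem:psi.not.im} at the next iteration are still in force — in other words, that the bookkeeping in $B$ and $\lambda$ faithfully records "which equalities among path-weights are already forced on $A$.'' A clean way to organize this is to phrase the invariant as: \emph{for every part $I_i^{(j)}$ and every pair of nodes $u,v$, the set of $u$-to-$v$ paths in $I_i^{(j)}$ all have equal $\bar A$-weight and this common weight is $\le$ the $\bar A$-weight of any $u$-to-$v$ path occurring in any part of $\G$ or $\H$}; this invariant holds for $\mathcal{I}^{(1)}$ by the argument above and is preserved by Steps 2 and 3 precisely because of Lemma \ref{lem:psi.not.im}. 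Once the invariant is in hand, $A \in \bigcap_i\psi(I_i^{(k)}) = \Psi(\G\vee\H)$ is immediate, completing the proof. A minor additional point worth checking is that associativity of $\vee$ (already noted after Lemma \ref{lem:join}) is not actually needed here, though it reassures us that $\Psi$ is well-defined on arbitrary joins.
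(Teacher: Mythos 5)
Your proposal is correct and follows essentially the same route as the paper: the easy inclusion from the first statement of Lemma \ref{lem:psi.not.im}, and the reverse inclusion by induction along the iterations of Algorithm \ref{alg:join.on.CF}, using cases (a) and (b) of Lemma \ref{lem:psi.not.im} to show that every cycle placed in $B'$ is already critical for any $A \in \Psi(\G)\cap\Psi(\H)$, so adjoining it does not change the cone. The invariant you formulate to make the inductive step rigorous is a reasonable sharpening of what the paper leaves implicit.
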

\begin{proof} The `$\subseteq$' direction follows from Lemma \ref{lem:psi.not.im}. For the other direction, consider the sequence $(B'_1, B'_2, \ldots, B'_k = \emptyset)$ at line \ref{ln:b.prime} of Algorithm \ref{alg:join.on.CF}. Let $(\lambda_1, \ldots, \lambda_k)$ be the corresponding sequence of partition used at the start of line \ref{ln:compute}. We shall prove the `$\supseteq$' direction by induction on $k$. 

If $k = 1$, $B'_1 = \emptyset$. Since the parts of $\G$ and $\H$ are connected, this cannot happen when the union of line \ref{ln:compute} was taken over components with disjoint sinks. Thus $\lambda(\G) = \lambda(\H) = \lambda(\G \vee \H) = \lambda$. By Lemma \ref{lem:psi.not.im}, 
$$
\Psi(\G \vee \H) = \bigcap_{i=1}^{|\lambda|}\psi\left(\bigcup_{u \in \lambda_i}\G(u) \cup \H(u)\right) = \bigcap_{i=1}^{|\lambda|}\bigcap_{u \in \lambda_i}\psi(\G(u)) \cap \psi(\H(u)) = \Psi(\G) \cap \Psi(\H).
$$

If $k = 2$, each cycle $C \in B'_1$ was generated by union of parts in $\G$ and $\H$ that either have non-disjoint sinks, or have the triple-node situation. These are precisely the two cases covered by Lemma \ref{lem:psi.not.im}. Thus for any $A \in \Psi(\G) \cap \Psi(\H)$, $\bar{A}(C) = 0$, and the cone defined by $\Psi(\G) \cap \Psi(\H)$ is unchanged when we adjoin each cycle $C \in B'_1$ to each of the parts of $\G$ and $\H$. Call this new set $\G'$ and $\H'$, that is, define $\G'(u) = \G(u) \bigcup_{C \in B'_1}C, \H'(u) = \H(u) \bigcup_{C \in B'_1}C$. Now suppose that there are nodes $u,v \in C$ which lie in different components of $\lambda_1$. But $u, v \in s(\G'(u)) \cap s(\H'(u))$, thus
$$ \Psi(\G(u)) \cap \Psi(\H(u)) = \Psi(\G'(u)) \cap \Psi(\H'(u)) = \Psi(\G'(u) \cup \H'(u)). $$
Therefore $ \Psi(\G') \cap \Psi(\H') = \Psi(\G) \cap \Psi(\H).$
Since $B'_2 = \emptyset$, it follows that
$$ \Psi(\G \vee \H) = \Psi(\G') \cap \Psi(\H').$$
Combining the last two equations proves the case $k = 2$. Since the algorithm terminates in at most $n^2$ steps, $k \leq n^2$, and induction completes the proof.
\end{proof}

\begin{prop}\label{prop:im} $\Psi$ defines a bijection from the join-irreducibles of $\mathcal{CF}[n]$ to the closed, full-dimensional cones of $\mathcal{F}_n$. 
\end{prop}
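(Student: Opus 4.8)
The plan is to show two things: (i) $\Psi$ maps the join-irreducibles of $\mathcal{CF}[n]$ — the complete connected functions — into the collection of closed full-dimensional cones of $\mathcal{F}_n$, and (ii) this correspondence is a bijection. Recall from the discussion at the start of Section \ref{sec:prelim} that the open (equivalently, closed full-dimensional) cones of $\mathcal{F}_n$ are in bijection with complete connected functions: a complete connected function is a collection of $n$ in-directed spanning trees, one rooted at each node, together with a single cycle, satisfying the path compatibility condition of Lemma \ref{lem:path.comp}. Concretely, a full-dimensional cone of $\mathcal{K}_{n,i^\ast}$ records, for each $j\ne i^\ast$, the longest path from $j$ to $i^\ast$; a full-dimensional cone of $\mathcal{K}_n$ glues these together over all $i^\ast$; intersecting with a codimension-one cone of $\mathcal{J}_n$ (a cycle, which becomes the critical cycle) and flattening along $-\1\1^T$ gives the full-dimensional cones of $\mathcal{F}_n$. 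So the target set is already parametrized by complete connected functions; I just need $\Psi$ to realize this parametrization.

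First I would spell out, for a complete connected function $\G=(G_1,\dots,G_n)$ with $G_i$ the circled tree rooted at $i$ (all sharing the same cycle $C$), exactly what cone $\Psi(\G)=\bigcap_i\psi(G_i)$ is. By Definition \ref{defn:psi}/Algorithm \ref{alg:psi}, $\psi(G_i)$ is cut out by: the equalities $\bar A(C)=0$ (so $A_{vv}=\lambda(A)$ on $C$, forcing $C$ to be critical), and, for each non-tree edge $u\to v$ of $G_i$ present in $G_i$, the equality $\bar A_{uv}+\bar A((T_i)_v)=\bar A((T_i)_u)$ — but since $G_i$ is a circled tree, Algorithm \ref{alg:psi} outputs *no* equalities beyond $\bar A(C)=0$, only the $n(n-1)$ facet inequalities $\bar A_{uv}+\bar A((T_i)_v)\le \bar A((T_i)_u)$ asserting that the tree path $(T_i)_u$ is a longest path from $u$ to $i$. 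So $\Psi(\G)$ is exactly: $C$ is a critical cycle, and for every node $u$ and every target $i$, the path prescribed by $G_i$ from $u$ to $i$ is a longest path. By Proposition \ref{prop:kleene} and the LP description of the Kleene-star fan preceding Corollary \ref{cor:kn}, this is precisely the (closed) cone of $\mathcal{K}_n$ indexed by these longest-path trees, intersected with the closure of the cycle-cone for $C$ in $\mathcal{J}_n$; flattening along $-\1\1^T$ and adding back the lineality space $\spn\{\1\1^T\}+V_n$ (which Algorithm \ref{alg:psi}'s equations are invariant under, as noted in the lemma after Definition \ref{defn:psi}) this is exactly a closed full-dimensional cone of $\mathcal{F}_n=\partial\mathcal{K}_n+\spn\{\1\1^T\}+V_n$. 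This establishes that $\Psi$ lands in the right place and, reading the identification backwards, that every full-dimensional cone arises this way; hence surjectivity.

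For injectivity: if $\G\ne\G'$ are distinct complete connected functions, they differ in some tree $G_i\ne G_i'$, so there is a node $u$ whose prescribed longest path to $i$ differs, say $G_i$ prescribes $P$ and $G_i'$ prescribes $P'$ with $P\ne P'$ (or they differ in their critical cycle). In the first case $\psi(G_i)$ forces $P$ to be a longest $u\to i$ path and $P'$ not strictly longer, while $\psi(G_i')$ forces the reverse; a generic matrix in the relative interior of one cone strictly prefers $P$ and thus violates a defining inequality of the other (the minimality statement in the lemma after Definition \ref{defn:psi} guarantees these inequalities are genuinely active on a facet). In the cycle case, two different critical cycles force incompatible equalities $\bar A(C)=0$ vs.\ $\bar A(C')=0$ together with generic strict inequalities on all other cycles. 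Either way $\mathrm{relint}\,\Psi(\G)\cap\Psi(\G')=\emptyset$, so $\Psi(\G)\ne\Psi(\G')$. I expect the main obstacle to be the bookkeeping in the first step: carefully matching the inequality system output by Algorithm \ref{alg:psi} against the vertex/longest-tree description of the Kleene-star LP fan $\mathcal{K}_{n,i^\ast}$, and checking that flattening along $-\1\1^T$ does not identify or merge distinct cones — i.e., that the projection is injective on full-dimensional cones — which follows because the lineality directions $\spn\{\1\1^T\}+V_n$ are exactly the ones already modded out in Algorithm \ref{alg:psi} and the cone $\mathcal{J}_n$ is pointed modulo $V_n$.
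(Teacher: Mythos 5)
Your proposal is correct and follows essentially the same route as the paper's proof: both identify the closed full\-dimensional cones of $\mathcal{F}_n$, via its construction from $\mathcal{K}_n$ and $\mathcal{J}_n$, with collections of longest-path trees unioned with a critical cycle (i.e.\ complete connected functions), and then observe that $\Psi$ realizes exactly this correspondence by the definition of $\psi$. The paper states this in three sentences; you supply the bookkeeping (matching Algorithm \ref{alg:psi}'s inequality system to the Kleene-star LP description and checking the flattening) that the paper leaves implicit.
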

\begin{proof} By construction of $\mathcal{F}_n$, its closed, full-dimensional cone are in bijection with a collection of $n$ trees of longest paths on the $n$ roots, each union with a cycle $C$. Note that trees with roots in $C$ coincide. Thus the set of such trees are precisely the complete connected function, which are the join-irreducibles of $\mathcal{CF}[n]$. Finally, $\Psi$ maps a complete connected function to the corresponding cone in $\mathcal{F}_n$ by definition of $\psi$. 
\end{proof}

\emph{Proof of Theorem \ref{thm:buddy} (lattice anti-isomorphism statement).} Propositions \ref{prop:cf.join} and \ref{prop:im} show that $\Psi$ is a bijection from the join-irreducibles of the finitely generated semilattice $\mathcal{CF}[n]$ to the meet-irreducibles of the face lattice of $\mathcal{F}$. It follows from Proposition \ref{prop:vee} that $\Psi$ is a lattice anti-isomorphism. This concludes the proof of Theorem \ref{thm:buddy}. \qed

\emph{Proof of Corollary \ref{cor:buddy}.}\,\,
Let $\G = (G_1, \ldots, G_k)$ be a complete set of connected relations. Let $A \in \Psi(\G)$. By Proposition \ref{prop:kleene}, the sinks $s(G_i), i = 1, \ldots, k$ index distinct tropical vertices of $Pol(A)$. Write $\G = (\D, \mathcal{E})$, where $\D$ is the collection of complete set of connected relations whose sink contain a critical cycle. Thus the corresponding columns of $Pol(A)$ are precisely the extreme tropical eigenvectors of $Eig(A)$ \cite{BCOQ, butkovic}. Therefore, compatible set of connected relations index cones of linearity of the tropical eigenvector map, which are closed cones of $\mathcal{P}_n$.  \qed

\section{Codimension, facet-defining equations and symmetries}\label{sec:codim}
It follows from the proof of Theorem \ref{thm:buddy} that one can obtain the defining equations and inequalities of cones in $\mathcal{F}_n$ from their complete connected relations $\G$ by computing $\Psi(\G)$. One method is to apply Algorithm \ref{alg:psi} to obtain $\psi(G_i)$ for each part $G_i$ of $\G$, then compute their intersections. Here we present a more self-contained algorithm and show how one can quickly compute the codimension of $\Psi(\G)$. The proof of minimal representation similar to that of Algorithm \ref{alg:psi} and hence omitted. 

\begin{algorithm}
\caption{Minimal representation of $\Psi(\G)$.}\label{alg:Psi}
\begin{algorithmic}[1]
\State Input: $\G = (G_1, \ldots, G_k) \in \mathcal{CF}[n]$
\State Output: $\Psi(\G)$
\Procedure{Min-$\Psi$}{$G_1, \ldots, G_k$}
	\For{each $i = 1, \ldots, k$}
		\State \textbf{print} $\psi(S(G_i))$ \Comment{Compute $\psi(S(G_i))$ using Algorithm \ref{alg:psi}}
		\State Compute a spanning tree $T$ of $G_i$. Let $r$ denote its root. \label{ln:ic}
		\For{each edge $u \to v$ not in $T$, $u \in s(G_j), v \in s(G_i)$, $j \neq i$} 
		\If{$u \to v$ in $\widetilde{G}_j$} 
			\State \textbf{print} 
			$\bar{A}_{uv} + \bar{A}(T(v \to r)) = \bar{A}(T(u \to r))$.
		\Else
			\State \textbf{print} 
			$\bar{A}_{uv} + \bar{A}(T(v \to r)) \leq \bar{A}(T(u \to r))$.
		\EndIf
		\EndFor \label{ln:endforic}
	\EndFor
\EndProcedure
\end{algorithmic}
\end{algorithm}

\begin{lem} Algorithm \ref{alg:Psi} outputs a minimal set of equations and inequalities defining the cone $\Psi(G)$, and is independent of the choice of the spanning tree $T$.
\end{lem}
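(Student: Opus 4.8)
The plan is to check three claims in turn, in increasing order of effort: that Algorithm~\ref{alg:Psi} is correct, i.e. its output defines $\Psi(\G)=\bigcap_{i=1}^{k}\psi(G_i)$ of Definition~\ref{defn:cf.psi}; that the output is irredundant; and that it does not depend on the spanning trees $T$ chosen in line~\ref{ln:ic}. The last two are the ones that genuinely require argument, and both mirror what was already done for Algorithm~\ref{alg:psi}.

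Correctness I would obtain by running, for each part $G_i$, Algorithm~\ref{alg:psi} on $G_i$ with a spanning tree adapted to the sink structure: one whose restriction to $S(G_i)$ spans $S(G_i)$ with root $r$, and whose inter-class arcs lift an in-directed spanning tree of the contraction $\widetilde G_i$ rooted at $s(G_i)$. By Lemma~\ref{lem:alg.psi} this computes $\psi(G_i)$, and its relations fall into those supported on the block $s(G_i)\times s(G_i)$, which are exactly the relations printed as $\psi(S(G_i))$, and those coming from an arc $u\to v$ with $u\notin s(G_i)\ni v$, which have exactly the shape of lines~\ref{ln:ic}--\ref{ln:endforic}. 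The two equality-versus-inequality tests match up because, by Definition~\ref{defn:ccr}(b), the arc $s(G_j)\to s(G_i)$ lies in the subgraph of both $\widetilde G_j$ and $\widetilde G_i$ rooted at $s(G_i)$, so $s(G_j)\to s(G_i)\in\widetilde G_j$ iff $G_i$ has some arc from $s(G_j)$ to $s(G_i)$; and conditions (b) and (c) of Definition~\ref{defn:ccr}, via Lemma~\ref{lem:psi.not.im}, ensure that over $\bigcap_l\psi(G_l)$ the cross-part relations of a single $\psi(G_i)$ that are \emph{not} printed --- precisely those forced to coincide across parts --- are already consequences of the printed equalities. Taking the union over $i$ then yields a defining system for $\Psi(\G)$.

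Irredundancy follows by the linear-independence argument used for Algorithm~\ref{alg:psi}: associate to each printed relation the matrix entry $A_{uv}$ of the off-tree arc $u\to v$ that produced it. Entries from the $\psi(S(G_i))$ block lie in $s(G_i)\times s(G_i)$, and these blocks are pairwise disjoint because the sinks partition $[n]$ (Definition~\ref{defn:ccr}(a)); an entry from a cross-part relation has $u\notin s(G_i)\ni v$ and occurs in no other printed relation, since $u\to v$ belongs to none of the chosen trees and contributes only once. Hence every printed relation contains a coordinate appearing in no other, so the relations are linearly independent and none is redundant. Independence of $T$ follows the proof of Lemma~\ref{lem:alg.psi}: $\psi(S(G_i))$ is already choice-free, and for the cross-part relations, given a second spanning tree $T'$ of $G_i$ with root $r'$, the printed equalities force $\bar A(T'(v\to r'))=\bar A(T(v\to r))-\bar A(T(r'\to r))$ for every node $v$ --- the same path-weight argument, using that $S(G_i)$ is strongly connected so that $r$ and $r'$ are joined in both directions --- and the common term $\bar A(T(r'\to r))$ cancels from each relation, leaving the same system.

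The step I expect to be the real obstacle is correctness, and within it the bookkeeping that reconciles Algorithm~\ref{alg:Psi}'s contraction-based equality test with Algorithm~\ref{alg:psi}'s arc-membership test and pins down exactly which cross-part relations must be printed; this is precisely where conditions (b) and (c) of Definition~\ref{defn:ccr} and Lemma~\ref{lem:psi.not.im} (equivalently Proposition~\ref{prop:vee}) are used.
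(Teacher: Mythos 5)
The paper does not actually prove this lemma---it states only that the proof of minimal representation is similar to that of Algorithm~\ref{alg:psi} and hence omitted---and your proposal carries out exactly that plan: the leading-entry linear-independence argument for minimality, the path-weight cancellation of Lemma~\ref{lem:alg.psi} for independence of $T$, plus an explicit correctness step via Definition~\ref{defn:ccr}(b),(c) and Lemma~\ref{lem:psi.not.im} that the paper leaves entirely implicit. One point to tighten: with $k$ trees in play, an edge $u\to v$ that is off-tree for $T_i$ (and hence owns a printed relation) can still be a tree edge of some $T_{i'}$ and so reappear in the path terms $\bar{A}(T_{i'}(\cdot\to r'))$ of other printed relations, so your claim that each relation contains a coordinate occurring in no other is too strong as stated and needs the same kind of elimination-order care that the critical-cycle entries (entering every relation through $\lambda(A)$) already require in the Algorithm~\ref{alg:psi} case.
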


\begin{prop}\label{prop:codim} Let $\G = (G_1, \ldots, G_k) \in \mathcal{CF}[n]$. The codimension of the cone $\Psi(\G)$ is
$$ codim(\Psi(\G)) = \sum_{i=1}^k(e_i - v_i + E_i-V_i) $$
where $v_i, e_i$ are the number of nodes and edges of the graph $S(G_i)$, $V_i$ is the number of nodes in the contraction $\tilde{G}_i$ with an outgoing edge to the node indexed by $s(G_i)$, and $E_i$ is the total out-degree of such nodes. 
In particular, the maximal codimension of $\G$ is $n(n-1)$, and this happens when $G$ is the complete graph on $n$ nodes with $n$ self-loops.  
\end{prop}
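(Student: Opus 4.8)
\emph{Proof plan.}\,
The plan is to read the codimension off the output of Algorithm~\ref{alg:Psi}. By the lemma immediately preceding this proposition, that algorithm prints an irredundant system of linear equalities and inequalities whose solution set is $\Psi(\G)$, so $codim(\Psi(\G))$ equals the number of (non-trivial) equalities it prints; I would count these one part at a time. For a fixed $i$ they fall into two batches: the equalities coming from the sub-call $\psi(S(G_i))$ on the sink subgraph, and those coming from the inner loop over non-tree edges running from another sink $s(G_j)$ into $s(G_i)$.

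For the first batch, run Algorithm~\ref{alg:psi} on the strongly connected graph $S(G_i)$ and choose its spanning tree to contain all but one edge of the cycle $C$ used to define $\lambda$. Of the $e_i-(v_i-1)$ non-tree edges of $S(G_i)$, exactly the remaining edge of $C$ produces the trivial identity $0=0$ (after substituting $\lambda=A(C)/|C|$ the corresponding linear functional vanishes identically), while each of the other non-tree edges $u\to v$ contributes a genuine equality, namely the one carrying the ``private'' variable $A_{uv}$. So this batch contributes $e_i-v_i$ equalities; when $s(G_i)$ is a single looped vertex this is $0$, and when $s(G_i)$ is an $\mathcal E$-sink (a single unlooped vertex) the sub-call contributes nothing --- a discrepancy I would carry along as the bookkeeping value $e_i-v_i=-1$, to be recovered by one extra equality in the second batch.

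For the second batch, contract the chosen spanning tree $T$ of $G_i$ along the partition $\lambda(\G)$; this yields an in-directed spanning tree $\widetilde T$ of the contraction $\widetilde G_i$, rooted at the node $s(G_i)$. Choose $T$ so that each of the $V_i$ nodes of $\widetilde G_i$ that has an edge into $s(G_i)$ uses such an edge as its $\widetilde T$-edge; then $\widetilde T$ uses exactly one out-edge at each of those nodes, so the number of their non-tree out-edges is their total out-degree minus one apiece, that is $E_i-V_i$. These turn out to be precisely the non-tree edges of $G_i$ that the inner loop records as equalities: the edges of $G_i$ lying entirely outside $s(G_i)$ are not re-examined here, because by condition~(b) of Definition~\ref{defn:ccr} the portion of $G_i$ feeding into each other sink $s(G_j)$ is governed by $G_j$ and is handled when $G_j$ is processed; and a short check shows that an $\mathcal E$-sink part produces exactly one equality more than this naive count, absorbing the $-1$ left over from the first batch. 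Summing the two batches over $i=1,\dots,k$ gives $codim(\Psi(\G))=\sum_{i=1}^k(e_i-v_i+E_i-V_i)$.

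For the displayed ``in particular'' claim, every $\psi(G)$ contains the $n$-dimensional lineality space $\spn\{\1\1^T\}+V_n$, hence so does $\Psi(\G)$; thus $\dim\Psi(\G)\ge n$ and $codim(\Psi(\G))\le n(n-1)$, with equality exactly when $\Psi(\G)$ equals that lineality space, i.e.\ the minimum cone of $\mathcal F_n$. Under the anti-isomorphism of Theorem~\ref{thm:buddy} this corresponds to the maximum element of $\mathcal{CF}[n]$, the one-part tuple whose connected relation is the complete digraph on $[n]$ with all $n$ self-loops; for it $k=1$, $v_1=n$, $e_1=n^2$, and the contraction $\widetilde G_1$ is a single node with no edges, so $E_1=V_1=0$ and the formula returns $n^2-n=n(n-1)$. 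The main obstacle I expect is the equality-count in the second batch: proving that the non-tree edges into $s(G_i)$ recorded as equalities number exactly $E_i-V_i$ (with the $\mathcal E$-offset) needs the precise correspondence between spanning trees of $G_i$ and of $\widetilde G_i$ together with the irredundancy from the preceding lemma, and it is easy to miscount here --- for instance by mishandling self-loops in the contraction or by double-counting edges between two non-sink components.
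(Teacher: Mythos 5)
Your plan follows the paper's proof of this proposition essentially verbatim: identify $codim(\Psi(\G))$ with the number of equalities in the irredundant output of Algorithm~\ref{alg:Psi}, count $e_i-v_i$ equalities from the sub-call $\psi(S(G_i))$ by choosing the spanning tree to absorb all but one edge of the critical cycle (that last edge printing a trivial identity), and count $E_i-V_i$ equalities from the inner loop by the ``out-degree minus one'' argument on the contraction $\widetilde G_i$. Your treatment of the ``in particular'' clause via the lineality space and the maximum element of $\mathcal{CF}[n]$ is a clean addition that the paper leaves implicit.

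The one concrete problem is the direction of your $\mathcal E$-part correction. For a part whose sink is a single unlooped vertex, the sub-call prints $0$ equalities, which is one \emph{more} than the bookkeeping value $e_i-v_i=-1$; so for the total to equal $\sum_i(e_i-v_i+E_i-V_i)$ the inner loop must print one \emph{fewer} equality than $E_i-V_i$, not ``one equality more than this naive count'' as you assert --- as written, your per-part total is $0+(E_i-V_i+1)$, which exceeds the claimed $(e_i-v_i)+(E_i-V_i)$ by $2$. The surplus hidden in $E_i-V_i$ comes from the contracted critical cycle, which appears as a self-loop of $\widetilde G_i$ at some node $s(G_j)$ and is counted in that node's out-degree, yet the corresponding edges are never printed by the inner loop for part $i$ because their targets lie outside $s(G_i)$ (already in the $2\times 2$ full-dimensional cone $C_{11}$ the $\mathcal E$-part's inner loop prints nothing while $E_i-V_i=1$). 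You rightly flag this as the step where it is easy to miscount, and in fairness the paper's own proof asserts the two batch counts are exactly $e_i-v_i$ and $E_i-V_i$ part by part without addressing this cancellation at all; but your proposal needs the sign of the compensation fixed, and a genuine argument (not just ``a short check'') that the deficit in one batch exactly matches the surplus in the other.
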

\begin{proof} The codimension of $\Psi(\G)$ is the number of equalities returned by Algorithm \ref{alg:Psi}. As in the proof of Proposition \ref{prop:cf.join}, we consider two cases. 

\textbf{Case 1: $k = 1$.} Here $E = V = 1$ and $\Psi(\G) = \psi(G)$. The spanning tree $T$ of $G$ consists of $v-1$ edges, each other edge in $G$ contributes one equality in the output of Algorithm \ref{alg:psi}, except for the special edge $r \to v^\ast$. Thus $codim(\Psi(\G)) = e - (v - 1) - 1 = e - v$. 

\textbf{Case 2: $k > 1$.} By case 1, the number of equalities produced from computing $\psi(S(G_i))$ is $e_i - v_i$. Thus it is sufficient to show that lines \ref{ln:ic} to \ref{ln:endforic} of Algorithm \ref{alg:Psi} yield $\sum_{i=1}^k(E_i - V_i)$ equalities. Suppose $s(G_j)$ has an outgoing edge to $s(G_i)$ in $\tilde{G}_i$. If this is its unique outgoing edge, then it must be in the spanning tree $T$, yielding no equality. Otherwise, since $\tilde{G}_i$ is weakly connected, each outgoing edge from $s(G_j)$ yields a unique alternative path from $s(G_j)$ to $s(G_i)$. Hence the number of equalities each such $s(G_j)$ contributes is exactly its number of outgoing edges minus 1. Taking the sum over all such $j$, we obtain $\sum_{i=1}^k(E_i - V_i)$. 
\end{proof}

\subsection{Symmetries of the fan $\mathcal{F}_n$}
The automorphism group of the complete directed graph on $n$ vertices consists of vertex permutations and reversing edge orientations. Vertex permutations induce symmetries on the fan $\mathcal{F}_n$, mapping one open cone to another. For edge reversals, note that $Pol(A) = Pol(A^T)$ up to vertex permutations\cite{JoswigK10, DS04}, thus edge reversals coincide with some vertex permutations. In terms of $\mathcal{CF}[n]$, edge reversals correspond to tracking all-pairs longest paths by the sources rather than the sinks. If we take $\G \in \mathcal{CF}[n]$, reverse all arrows, and then group the paths by their sink, we obtain another element $\H \in \mathcal{CF}[n]$, and $\G = \H$ up to vertex permutations. This is a non-obvious symmetry of the fan $\mathcal{F}_n$. See Example \ref{ex:n3} below.

\section{Examples}\label{sec:ex}
\begin{ex}[$n = 2$, continued] In this case $\mathcal{N}_2 = \mathcal{P}_2 = \mathcal{F}_2$, and the cones indexed by complete sets of connected functions are shown in Figure \ref{fig:n2}. We shall refer to the cones by their critical cycles. Along the cone $C_{11,22}$ defined by $A_{11} = A_{22} > (A_{12} + A_{21})/2$, $dim(Eig(A)) = 1$, and the two extreme tropical eigenvectors has types shown in the upper-left box of Figure \ref{fig:n2}. Even in this small example we can see Lemma \ref{lem:psi.not.im} in action: matrices in the relative interior of the cone $C_{11,22}$ are precisely the matrices in $\psi(C_{11}) \cap \psi(C_{22})$ but not $\psi(C_{11} \cup C_{22})$, and this is precisely due to the lack of uniqueness of the tropical eigenvector for matrices in this region. 
\end{ex}

\begin{ex}[$n=3$]\label{ex:n3} The $f$-vector of $\mathcal{F}_3$ is $(68, 207, 267, 186, 72, 14, 1)$. As a sanity check, note that $\mathcal{F}_3$ has a lineality space of dimension 3. Identify it with a complete pointed fan in $\R^6$, we see that the Euler characteristic of the $f$-vector of $\mathcal{F}_3$ excluding the point of codimension 6 should be that of a 5-sphere, and it is indeed 0.

Figure \ref{fig:f3.table} shows $N(k,\lambda, p,c)$, the number of cones in $\mathcal{F}_3$ of a given dimension $k$ whose indexing complete set of connected relations $\G$ satisfy $\lambda(\G) = \lambda$, with $p$ number of equalities coming from paths, and $c$ number of equalities coming from cycles. For example, $N(2,(1,1,1),1,1) = 90$ is the number of cones in $\mathcal{F}_3$ which has critical cycles of length at most 1, codimension 2 and thus 2 defining equations, out of which 1 comes from a pair of distinct paths to a critical vertex, and 1 comes from the existence of two critical cycles. For $\lambda(\G) = (3)$, that is, $\G$ has a 3-cycle, we omit $p$ and $c$ since any equality can be regarded as a cycle equality. 

\begin{figure}[h]
	\begin{center}
	 \includegraphics[width =\textwidth]{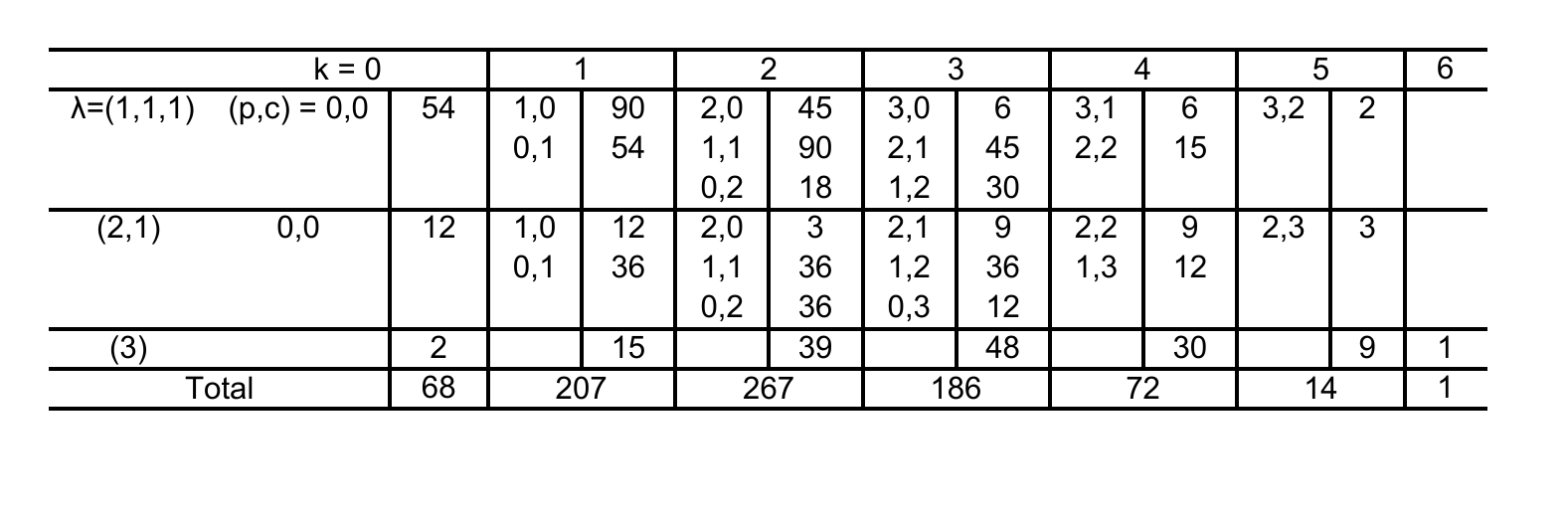}
	 \vskip -1.2cm
\caption{Table of $N(k, \lambda, p,c)$, the number of cones in $\mathcal{F}_3$ of a given dimension $k$ whose indexing complete set of connected relations $G$ satisfy $\lambda(G) = \lambda$, with $p$ the number of equalities coming from paths, and $c$ the number of equalities coming from cycles.}
\label{fig:f3.table}
	\end{center}
\end{figure}
\end{ex}

The 54 cones with partition $\lambda = (1,1,1), p = 0, c = 0$ refine the three cones of $\mathcal{N}_3$ corresponding to the self-loops at $1, 2$ and $3$. Ignoring the self-loops, we have 18 cones, with 5 equivalence classes up to permutations (and edge reversals). These equivalence classes and their sizes are shown in Figure \ref{fig:equiv} below.

Suppose we reverse all edges of $\G_3$. Then we obtain three out-directed trees. Re-organize the paths in these trees by their sinks, we obtain back $\G_3$. Thus edge reversal acts trivially on the orbit of $\G_3$. For $\G_4$, edge reversal coincides with the permutation $(1 \, 2)$.  

\begin{center}
\begin{figure}[h]
	 \includegraphics{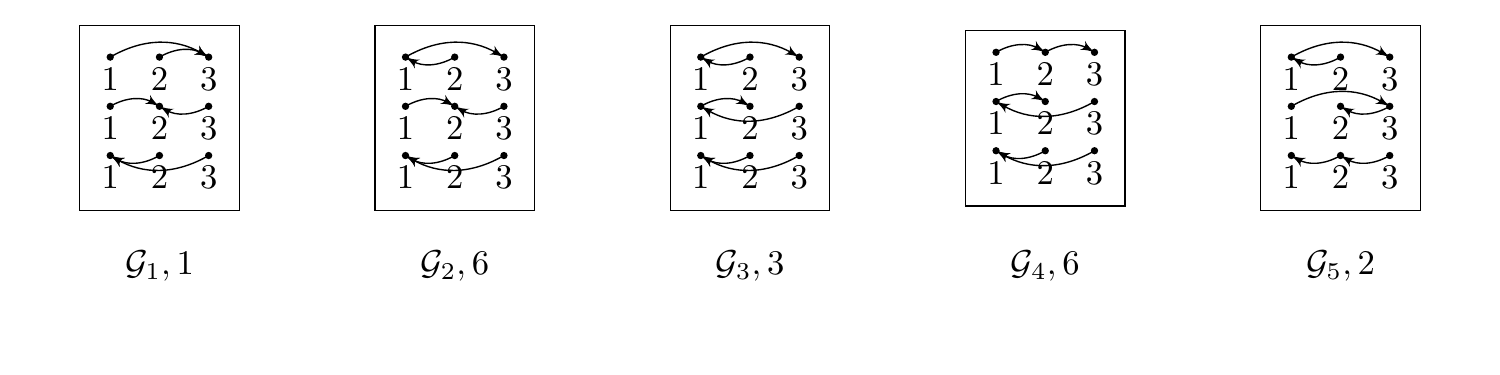}
	 \vskip -1cm
\caption{The five equivalence classes and their sizes, $\lambda = (1,1,1), p = c = 0$}
\label{fig:equiv} 
\end{figure}
\end{center}

\begin{ex}[Face lattice of a full-dimensional cone] We enumerate the face lattice of a full-dimensional cone in $\mathcal{F}_3$ indexed by the complete connected function $\G$ shown in the top solid box of Figure \ref{fig:face.lattice}. The $f$-vector of this cone is $1,6,15,20,15,6,1$. It is a cone over a 5-dimensional simplex with a three-dimensional lineality space.
Algorithm \ref{alg:Psi} gives the following set of defining equations and inequalities for $\Psi(\G)$. For clarity we express them as formulas in $\bar{A}$ instead of $A$
\begin{align*}
\bar{A}_{12} + \bar{A}_{21} &\geq \bar{A}_{33}, &\hspace{1em} \bar{A}_{23} &\geq \bar{A}_{21} + \bar{A}_{13}, \\
\bar{A}_{12} + \bar{A}_{21} &\geq \bar{A}_{11}, &\hspace{1em}
\bar{A}_{22} &\leq \bar{A}_{12} + \bar{A}_{21}, &\hspace{1em}
\bar{A}_{31} &\geq \bar{A}_{32} + \bar{A}_{21}. 
\end{align*}
The face lattice of $\Psi(\G)$ indexed by complete connected relations is displayed as solid graphs in Figure \ref{fig:face.lattice}. Blue arrows indicate subset inclusion. Red edges are those which belong to a cycle in the sink of the corresponding part. The six full-dimensional cones of $\mathcal{F}_3$ adjacent to $\Psi(\G)$ are shown in lighter print next to $\G$. 
\end{ex}

\newpage
\begin{landscape}
\thispagestyle{empty}
\begin{figure}[h]
\centerline{\includegraphics[scale = 0.65]{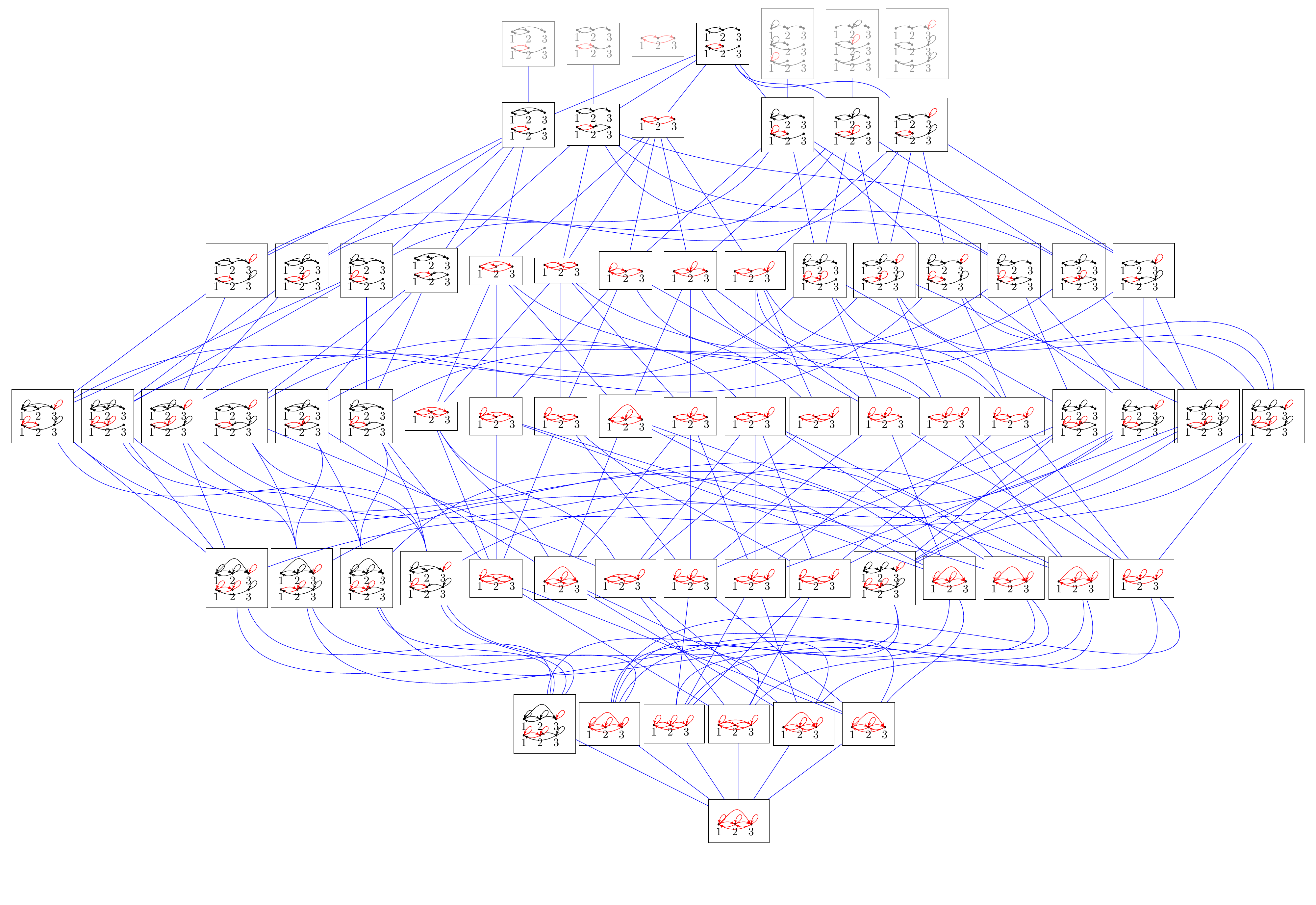}}
	 \vskip -1.5cm
\caption{The face lattice of $\Psi(G)$.}
\label{fig:face.lattice}
\end{figure}
\end{landscape}
\newpage

\newpage
\bibliographystyle{plain}
\bibliography{references}
\end{document}